\documentclass[11pt]{article}\input{amssym.def}\input{amssym}
\usepackage[active]{srcltx}
\usepackage{color}
\usepackage{tikz}
\usepackage{relsize}

\usepackage{hyperref}
\hypersetup{
    colorlinks=true, 
    linktoc=all,     
    linkcolor=blue,  
}

\newenvironment{annotacia}{\centerline{\sc Abstract}\vspace{2mm}\narrower\narrower\sf}

\def\theequation{\thesection.\arabic{equation}}
\makeatletter\@addtoreset{equation}{section}\makeatother

\def\nn{\nonumber}\def\lb{\label}
\def\be{\begin{equation}}
\def\ee{\end{equation}}
\def\ba{\begin{eqnarray}}
\def\ea{\end{eqnarray}}
\def\tr{{\rm Tr}\,}
\def\Tr#1{{\rm Tr}_{\! R^{\mbox{\scriptsize$(#1)$}}}}

\newcounter{theorem}\makeatletter
\@addtoreset{theorem}{section}\makeatother

\newtheorem{prop}[theorem]{Proposition}\newtheorem{rem}[theorem]{Remark}\newtheorem{lem}[theorem]{Lemma}
\newtheorem{def-lem}[theorem]{Definition-Lemma}\newtheorem{def-prop}[theorem]{Definition-Proposition}
\newtheorem{theor}[theorem]{Theorem}
\newtheorem{cor}[theorem]{Corollary}

\begin{document}
\title{ }
\rightline{To our friend Andrey Pogrebkov~~}

\rightline{on the occasion of his 80th birthday}
\vspace{1cm}

\begin{center}
	{\Large \textbf{Characteristic identities for orthogonal\\[3pt]
			quantum matrices: examples}}
	
	\vspace{1cm} {\large \textbf{Oleg Ogievetsky$^{\diamond\,\dag}$ 
			and Pavel Pyatov$^{\ast\,\star
			}$}}
	
	\vskip .8cm $^{\diamond}$Aix Marseille Universit\'{e}, Universit\'{e} de
	Toulon, CNRS, \\ CPT UMR 7332, 13288, Marseille, France
	
	\vskip .3cm $^{\dag}${I.E.Tamm Department of Theoretical Physics, 
		P.N. Lebedev Physical Institute, Leninsky prospekt 53, 119991 Moscow, Russia}
	
	\vskip .3cm $^{\ast}${National Research University "Higher School 
		of Economics", 20 Myasnitskaya street, Moscow
		101000, Russia}
	
	\vskip .3cm $^{\star}${Bogoliubov Laboratory of Theoretical Physics, 
		JINR, 141980 Dubna, Moscow region, Russia}
\end{center}

\begin{annotacia}\noindent 
	In this note, we illustrate our recent results regarding structure theory of  orthogonal Quantum Matrix algebras  \cite{OP-BMW,OP-reciprocal,OP-ortho} using the simplest nontrivial example: the RTT-algebra  associated with the canonical orthogonal R-matrix of type $O(3)$ (see, e.g., \cite{FRT}). By introducing a suitable linear order on the set of generators, we obtain a set of quadratic relations defining the algebra. Next, we investigate relations among its higher-degree polynomials. As a result, we verify `in components' the Cayley-Hamilton and the reciprocal relations, obtained earlier with the use of the R-matrix techniques. We explicitly calculate the coefficients of the characteristic polynomials of quantum matrices and  propose a
	conjecture regarding the form of the Hilbert-Poincar\'e series for the RTT-algebra.

\end{annotacia}

\newpage
\tableofcontents
\bigskip\bigskip\bigskip

\section{Introduction}    

\def\theequation{\thesection.\arabic{equation}}
\makeatletter\@addtoreset{equation}{subsection}\@addtoreset{equation}{section}\makeatother

The story of the noncommutative versions of the fundamental theorem of matrix algebra --- the Cayley-Hamilton theorem --- dates back to the early 1970s with the observation that the generators of the universal enveloping algebra 
$U(g)$ arranged in matrices, satisfy a certain version of the characteristic identity.
These results, first obtained at the level of representations \cite{BGr,Gr}, subsequently received an abstract-algebraic interpretation \cite{BCC} and were generalized to the case of Lie superalgebras (for a survey and list of references, see \cite{IWG}).

With the discovery of quantum groups \cite{D,Jimb,FRT}, the list of associative algebras generated by mutually non-commuting matrix components has expanded significantly.
Examples of characteristic identities have been derived for quantized universal enveloping algebras \cite{GZB,Md}, for their dual algebras of quantized functions on Lie groups (so-called RTT-algebras) \cite{EOW,Zh,OV}, and for Reflection Equation (RE-) algebras \cite{NT,PS}\footnote{This family of algebras was introduced into consideration in \cite{Ch,KS}.}.

In \cite{GPS1},  the Cayley–Hamilton theorem was proved in a unified manner for a family of RE-algebras defined by $GL(n)$-type R-matrices. It turned out that the R-matrix techniques employed in \cite{GPS1} also yield structural results for RTT-algebras \cite{PS2,IOPS,IOP}. Subsequently, the Cayley-Hamilton theorem was generalized to a broader class of $GL(n)$ type Quantum Matrix (QM-) algebras that encompasses both  RE- and  RTT-algebras \cite{IOP1,IOP2}. To date, this line of research has also led to the construction of characteristic identities for families of QM-algebras of type  $GL(m|n)$ \cite{GPS2,GPS3}\footnote{See \cite{KT1,KT2} for results in the classical supermatrix case.} and for QM-algebras of symplectic $Sp(2n)$ and orthogonal $O(n)$ types \cite{OP-BMW,OP-SpCH,OP-reciprocal,OP-ortho}.

Many other studies of characteristic identities and related issues in the context of quantum group theory have been limited to the canonical (Drinfeld-Jimbo's) quantization of universal enveloping algebras and their dual RTT-algebras (see, e.g., \cite{GZB,NT,Zh,OV,MRS,GLZ,Md,JW}). 
In these cases, known explicit relations for the algebra's generators can be used to construct a linearly ordered  basis for the space of polynomials in these generators, which can then be applied in further reasoning and calculations.
This approach is well-suited for the case of general linear quantum (super)groups. However, in orthogonal quantum matrix algebras, the set of relations turns out to be redundant: in addition to ordering relations, it includes supplementary orthogonality conditions  (see \cite{FRT}). In such a situation, constructing a linear basis for a quantum matrix algebra and using it for calculations becomes problematic.

In this note, we consider the simplest non-trivial example of the orthogonal $O(3)$ type RTT-algebra 
${\cal F}(R)_{q,v}$. The indices in the notation  are explained by the 
definition of this algebra which employs a `twisted' 
two-parameter generalization of the standard orthogonal R-matrix acting on the 
tensor square of the three-dimensional space. The R-matrix parameters are denoted by $q$ and $v$; $q$ is responsible 
for the `quantization', while $v$  arises when twisting the standard R-matrix\,\footnote{For a discussion of twist transformations of the R-matrix, see, e.g., \cite{Resh,IOP3}.}. 

We establish a suitable linear order on the set of the algebra generators and explicitly write down the ordering and orthogonality relations for these generators. We then investigate the impact of the orthogonality conditions on the construction of a linear basis in ${\cal F}(R)_{q,v}$ and propose a formula for its Hilbert-Poincar\'e series.   Using this linear basis, we re-verify -- for the case of ${\cal F}(R)_{q,v}$ -- the results previously obtained for the general case of the orthogonal QM-algebra using R-matrix techniques \cite{OP-reciprocal,OP-ortho}. Finally, we consider ${\cal F}(R)_{q,v}$ at the particular value of the twist parameter $v=1$, at which two important elements of this algebra -- $g$ and ${\goth e}_3$ (see Section \ref{RTT-alg}) -- called, respectively, the contraction and the 3-elementary sum become central. A quotient  algebra ${\cal F}_q(O(3)):={\cal F}_{q,v=1}/\langle q^{2(n-1)}g=1 \rangle$ can be viewed as a quantized algebra of functions on the orthogonal Lie group $O(3)$. We construct a linear basis for this algebra and present its Hilbert-Poincaré series. 
\smallskip

Summarizing our study of the algebra ${\cal F}(R)_{q,v}$, we arrive at the following conclusions.  Naturally, the use of a linear basis yields explicit formulas for important elements of the QM-algebra --- the coefficients of the characteristic identity. However, as our simple example demonstrates, proving structural results using a linear basis turns out to be extremely laborious, not very clarifying, and unlikely to be feasible for a broad class of QM-algebras.

\section{Multiparametric orthogonal R-matrices in dimension 3}
In this preparatory Section, we have gathered the necessary information about a specific orthogonal $O(3)$ type $R$-matrix, which we will use later for the construction and study of the RTT-algebra. For the sake of completeness, we present expressions for multiparameter orthogonal $R$-matrices of type $O(n)$, $n$ arbitrary, in the Appendix.
\medskip

The standard orthogonal R-matrix $R^\circ\in {\rm Aut}(V^{\otimes 2})$, acting on the tensor square of the three-dimensional space $V$, has the form \cite{FRT}:
\[
R^\circ=\left(\begin{array}{ccc|ccc|ccc}q&.&.&.&.&.&.&.&.\\.&\l&.&1&.&.&.&.&.\\
.&.&\l(1-\frac{1}{q})&.&-\frac{\l}{\sqrt{q}}&.&\frac{1}{q}&.&.\\
\hline
.&1&.&.&.&.&.&.&.\\
.&.&-\frac{\l}{\sqrt{q}}&.&1&.&.&.&.\\
.&.&.&.&.&\l&.&1&.\\
\hline
.&.&\frac{1}{q}&.&.&.&.&.&.\\
.&.&.&.&.&1&.&.&.\\
.&.&.&.&.&.&.&.&q
\end{array}\right), \quad \lambda:=q-q^{-1}.
\]
Here, dots mark the positions of the zero components of the R-matrix.
In this note, we will use a `twisted' version of this R-matrix equipped with two additional non-zero complex parameters $u$ and $v$
(2 is the maximal number of diagonal twisting parameters for the matrix $R^\circ$):
\be
\lb{R-O3}
R=\left(\begin{array}{ccc|ccc|ccc}q&.&.&.&.&.&.&.&.\\.&\l&.&v&.&.&.&.&.\\
.&.&\l(1-\frac{1}{q})&.&-\frac{\l u}{\sqrt{q}}&.&\frac{v^2}{q}&.&.\\
\hline
.&\frac{1}{v}&.&.&.&.&.&.&.\\
.&.&-\frac{\l}{u\sqrt{q}}&.&1&.&.&.&.\\
.&.&.&.&.&\l&.&v&.\\
\hline
.&.&\frac{1}{qv^2}&.&.&.&.&.&.\\
.&.&.&.&.&\frac{1}{v}&.&.&.\\
.&.&.&.&.&.&.&.&q
\end{array}\right). \phantom{\quad \lambda:=q-q^{-1}}
\ee
The parameter $u$ can be assigned an arbitrary non-zero value by means of a diagonal change of basis in the space $V$; we will choose it in such a way as to  avoid the appearance of square roots in the components of the R-matrix:
\be
\lb{fix-u}
u=\sqrt{q}\ .
\ee

On the contrary, the parameter $v$ is essential in the sense that it cannot be altered by a change of basis; 
it is related to the simple R-matrix twist transformation (see \cite{Resh,IOP3}).
Unlike the purely gauge parameter $u$, the parameter $v$ significantly affects the RTT-algebra associated with the R-matrix (\ref{R-O3}). We shall keep this parameter arbitrary until the last Section of the paper.
\smallskip
 
The R-matrix (\ref{R-O3})  satisfies the braid relation, also known as the constant Yang-Baxter equation
\be
\lb{YB}
R_1 R_2 R_1 = R_2 R_1 R_2.
\ee
Here, we have introduced the now-standard matrix notations $R_1=R\otimes {\rm Id}_V$ and $R_2={\rm Id}_{ V}\otimes R$, where ${\rm Id}_V$ is the identity operator on the space $V$.  More generally, with each $X\in {\rm End}(V^{\otimes 2})$ one associates a sequence of operators $X_i$ 
$$
X_i:= {\rm Id}_V^{\otimes(i-1)}\otimes X\otimes  {\rm Id}_V^{\otimes(n-i-1)
}\in {\rm End}(V^{\otimes n}), \quad 1\leq i\leq n-1,
$$
acting on the spaces $V^{\otimes n}$, $n\geq 2$.

For generic  $q$, the R-matrix (\ref{R-O3}) has 3 distinct eigenvalues: $q$, $-q^{-1}$ and $q^{-2}$ and consequently, its minimal polynomial is cubic. Moreover, this R-matrix gives rise to representations of the Birman-Murakami-Wenzl algebras \cite{BW,M}. We will need several formulas following from this fact.  Details of their derivation, as well as a general description of Birman-Murakami-Wenzl algebras and of their R-matrix representations  can be found in \cite{OP-BMW,OP-reciprocal} and the references cited therein. 

Before proceeding, we make a remark regarding  notation: throughout the text, the subscript $\scriptstyle "\!R"$ in the notation $X_R$ is used to emphasize that the quantity (matrix or operation)  $X$ is uniquely constructed from a given R-matrix $R$.
A (partial) list of such quantities associated with orthogonal R-matrices is given below.

\smallskip

$\bullet$ First of all, the unnormalized projector    
$$
K_R:={ \textstyle {q^2\over \lambda\,  }}\, (q\,{\rm Id}_{V^{\otimes 2}}-R)(q^{-1}\, {\rm Id}_{V^{\otimes 2}} +R),\qquad (K_R)^2 = \eta\, K_R,\qquad \eta:=q+1+q^{-1},
$$
onto the subspace of eigenvectors of $R$ with eigenvalue $q^{-2}$ of dimension 1 (for explanation of this fact, see \cite{IOP3}). 
The explicit expression for $K$ as the product of a column vector  and a row vector is
\be
\lb{K}
K_R = 
(\,
\cdot\, \cdot\, 1\, |\, \cdot\, 1\,\cdot\,|\, {q\over v^2}\,\cdot\,\cdot\,)^{\, t}\,\,
(\,\cdot\,\cdot\, q^{-1}\,|\, \cdot\, 1\,\cdot \, |\, v^2\, \cdot\,\cdot\,).
\ee

$\bullet$ 
Secondly, in Birman–Murakami–Wenzl algebras, one can introduce a remarkable sequence of idempotents known as
{\em $q$-antisymmetrizers} (for their definition and properties, see. e.g., \cite{OP-BMW}, Sections 2.3 and 2.4). Their images in R-matrix representations (see \cite{OP-BMW}, Section 3.1) have proven to be important technical tools in the study of QM-algebras.  
These matrix images we will, with an abuse of notation, also call $q$-antisymmetrizers.
We denote these matrix projectors by $A^{(i)}_R$, $i=1, 2, \dots$, where the superscript $i$ indicates the tensor power of the spaces $V$ on which the idempotent acts: $A^{(i)}(R) \in {\rm End}(V^{\otimes i})$.

For $R$-matrices of the orthogonal type $O(n)$, the $n$-th $q$-antisymmetrizer has rank 1, while all subsequent $q$-antisymmetrizers vanish identically (see \cite{OP-reciprocal}, Sections 3.1, 3.2). Below we present formulas for the nonzero $q$-antisymmetrizers corresponding to the $O(3)$ type R-matrix (\ref{R-O3})
\ba
\lb{A2}
&&
A^{(1)}_R={\rm Id}_V, \quad A^{(2)}_R = { \textstyle {q^3\over (1+q)(1+q^2)  }}\, (q\,{\rm Id}_{V^{\otimes 2}}-R)(q^{-2}\, {\rm Id}_{V^{\otimes 2}} -R),
\\[5pt]
\lb{A3}
&&
\left(A^{(3)}_R\right)_{i_1 i_2 i_3}^{j_1 j_2 j_3}= {|\epsilon_{\scriptscriptstyle R}\rangle^{j_1 j_2 j_3}\otimes  \langle\epsilon_{\scriptscriptstyle R}|_{i_1 i_2 i_3} \over{\scriptstyle (1+q^2)(1+q+q^2)}}, 
\; \mbox{\small where}\;\;
\begin{array}{c|ccccccc|}
i\, j\, k & 123&132&213&222&231&312&321
\\
\hline	
|\epsilon_{\scriptscriptstyle R}\rangle^{i j k} & 1&-qv &\! -qv&q(q-1)v\!&qv^3&q v^3& \! -q^2 v^{4^{\phantom I}}\!\!\!
\\
\hline
\langle \epsilon_{\scriptscriptstyle R}|_{ijk} & 1&-{q\over v}&-{q\over v}&{1-q\over v}&{q\over v^3}&{q\over v^3}& \! -{q^{2^{\phantom I}}\over v^4}\!\!\!
\end{array}\; .
\ea
Here, we write the rank-1 matrix  $A^{(3)}_R$ as a tensor product of  a column 3-vector and a row 3-vector, and  list the non-zero components of these vectors 
\footnote{Here, the summation is understood to run from the lower index to the upper index.  For instance, 
	if $A$ and $B$ are two endomorphisms of a vector space $V$, then
	$(AB)^i_j:=\sum_l A^i_l B^l_j$.}.

$\bullet$ Furthermore, there exist two invertible matrices $C_R,\;D_R\in {\rm Aut}(V)$, uniquely characterized by their properties
$$
{\rm Tr}_{(1)} (C_R)_1 R_1 = ({\rm Id}_V)_2, \qquad {\rm Tr}_{(2)} (D_R)_2 R_1 = ({\rm Id}_V)_1.
$$
Here the symbol ${\rm Tr}_{(i)}$ stands for the trace over an $i$-th component space in the tensor power of the spaces $V$. 
Explicit expressions for the matrices $C_R$ and $D_R$ corresponding to the R-matrix (\ref{R-O3}) are given by:
\be
\lb{Dr}
D_R={\rm diag}\left(q^{-3},\,q^{-2},\,q^{-1}\right) ,\qquad 
C_R= {\rm diag}\left(q^{-1},\, q^{-2},\,q^{-3}\right) .
\ee
These matrices allow for the introduction of the concepts of left and right R-traces, see, e.g., \cite{Og}.
In the following, we will use the right version of the R-trace:
\be
\lb{RTr}
{\rm Tr}_R X := {\rm Tr} (D_R X), X\in {\rm End}(V)\otimes W,
\ee
where $W$ is the linear space to which the components of the matrix $X$ belong. 
\smallskip

$\bullet$ Finally, there are two more invertible matrices $O_R,\, G_R \in {\rm Aut}(V)$ that are uniquely determined by the orthogonal R-matrix. These matrices are responsible for the behavior of two remarkable group-like elements of the 
RTT-algebra associated with the given R-matrix: the  determinant and the contraction of the quantum matrix (see the next Section). Their general formulas  and explicit expressions in case of the R-matrix (\ref{R-O3}) are given below. 
\ba
\lb{G}
G_R &:=& \tr_{(2)} P_{1} (K_R)_{1} = {\rm diag}\left(v^{-2},\, 1,\,v^{2}\right) ,
\\
\lb{O} 
O_R &:=&{ q\,(q^2+1+q^{-2})\over (q^2-q+1)}\,\tr_{(2,3)}\left(P_{1}P_{2}\,A^{(3)}_R\right) = {\rm diag}\left(v^{-3},\, 1,\,v^{3}\right).
\ea
Here $P\in {\rm Aut}(V^{\otimes 2})$ is the permutation operator: $P (x\otimes y)=y\otimes x\;\;\forall\, x,y\in V$.
Formulas (\ref{G}) and (\ref{O}) are variants of the general definitions of the matrices $G_R$ and $O_R$ given in Propositions 4.6 and 4.12 of \cite{OP-reciprocal}, adapted to our specific case of the RTT-algebra and calculated for the R-matrix (\ref{R-O3}).
Note that  $G_R$ and $O_R$ satisfy the relation $(G_R)^3=(O_R)^2$, which reflects their general property proven in Proposition 4.21 \cite{OP-reciprocal}. 

\section{General results on the structure of orthogonal RTT-algebras}
\lb{RTT-alg}

In this Section, we adapt general structural results concerning orthogonal type $O(n)$ QM-algebras obtained in \cite{OP-reciprocal, OP-ortho}  to the case of the standard RTT-algebra of type $O(3)$.
\smallskip

Recall that the RTT-algebra ${\cal F}(R)$ associated with a given R-matrix $R\in {\rm Aut}(V^{\otimes 2})$\footnote{Strictly speaking, in the definition of a QM-algebra, an additional condition of strict skew-invertibility must be imposed on the R-matrix $R$. For details, see \cite{OP-BMW}, Section 4.} is the quotient  of the free associative unital 
algebra $W={\Bbb C}\langle T^a_b\rangle_{a,b=1}^{\dim V}$ 
by the two-sided ideal generated by 
the elements of  matrix relation
\be 
R\, T_{ 1}T_{ 2} = T_{ 1}T_{ 2}R\, . 
\label{RTT}
\ee
Here $T = \|T^a_b\|_{a,b=1}^{\dim V}$ is the matrix of 
generators; $T_1=T\otimes {\rm Id}_V$ and $T_2={\rm Id}_V\otimes T$ are two copies of $M$.
\smallskip

Each QM-algebra contains a distinguished abelian subalgebra—the so-called {\em characteristic subalgebra}. In the case of the RTT-algebra ${\cal F}(R)$ associated with an orthogonal $O(n)$ type R-matrix, the characteristic subalgebra is generated by a set of elements\footnote{These elements are well-defined under certain restrictions on the parameter $q$; see \cite{OP-ortho}, the preamble to Section 4.}
\be
\lb{elementary symm-f}
\goth{e}_0 :=1, \quad
\goth{e}_i := 
\Tr{1,2,\dots ,i}(T_{1}T_2\dots T_{i}\,
A_R^{(i)}), \; \ i= 1,2,\dots ,n,
\ee
called {\em the elementary sums}, and  one additional element
\be\lb{tau}
g := 
 {\lambda\, q^n\over (q^n-1)(q^{n-1}+q)}\,\Tr{1,2} \Bigl( T_{1}T_{2}\, K_R\Bigr) ,
\ee
called {\em the contraction}. 
The last non-zero elementary sum ${\goth e}_n$ is proportional to a remarkable group-like element of the RTT algebra --- the {\em $q$-determinant} of the quantum matrix $T$ (see \cite{OP-reciprocal}, Section 4.4):
\be
\lb{q-det}
{\det}_q T= q^{n(n-1)}{\goth e}_n.
\ee

The generators of the characteristic subalgebra are not algebraically independent. They satisfy the following system of {\em reciprocal relations} (see \cite{OP-reciprocal}, Section 4.3)
\be
\lb{reciprocal} 
g^{i}\, \goth{e}_{n-i}\, =\, \goth{e}_n\, \goth{e}_{i}\, ,\quad i=1,2,\dots ,n. 
\ee

Two of these elements -- $g$ and ${\goth e}_n$ -- constructed using the rank-1 matrices $K_R$ and $A_R^{(n)}$, satisfy particularly simple commutation relations with the generators of the algebra (see \cite{OP-reciprocal}, Propositions 4.6 and 4.12):
\be
\lb{T-g-e3}
T\, g\, =\, g\, ( G_R^{-1}T G_R)\, , \qquad T\, \goth{e}_n\, =\, \goth{e}_n\, ( O_R^{-1} T O_R).
\ee

Elements of the characteristic subalgebra appear as coefficients of the powers of the quantum matrix $T$ in the characteristic identity for the algebra ${\cal F}(R)$. 
To formulate this identity, it is necessary to properly define the powers of the quantum matrix $T$. We present the definition below\footnote{For a detailed discussion of the concept of the matrix product in QM-algebras, see \cite{OP-BMW}, Section 4.}.

Consider a map $\phi_R$ which combines linearly components of any matrix 
$X = \|X^a_b\|_{a,b=1}^{\dim V}$
\be
\lb{phi} 
\phi_R(X)_1 := \Tr{2} \left( X_{ 2} R\right),
\ee
An analogue of the quantum matrix {\em $k$-th power} $T^{\overline k}$ \,
is determined by induction on $k$
\be
\lb{Tk}
T^{\overline{0}} :={\rm Id}_V\, , \quad T^{\overline{k}}\, :=\, T\cdot
\phi_R(T^{\overline{k-1}}).
\ee
Here $\cdot$ denotes the usual matrix multiplication.

Now we are ready to formulate the analogue of the {\em Cayley-Hamilton theorem}
for the orthogonal $O(n)$-type RTT-algebra ${\cal F}(R)$. Here we present this theorem for algebras of type $O(2\ell - 1)$, $\ell = 2, 3, \dots$. For algebras of type $O(2\ell)$, the characteristic identity takes a different form (see, \cite{OP-ortho}, Section 4).

\begin{theor}
The quantum matrix $T$ of 
generators of the RTT-algebra ${\cal F}(R)$
of the odd orthogonal type $O(n)$,  
$n=2\ell -1$, satisfies the matrix identity
\be
\lb{orthoCH-odd}
\sum_{j=0}^{\ell -1}(-q)^{j}\sum_{r=0}^{\ell-j-1} \left\{ T^{\overline{n-j-2r}} (g^{\ell-1}{\goth e}_j-{\goth e}_n {\goth e}_{j-1}) \, -\,T^{\overline{n-j-2r-1}} ({\goth e}_n {\goth e}_{j}-g^{\ell}{\goth e}_{j-1})\right\}(q^2 g)^{2r}
\, =\, 0\, ,
\ee
where we set ${\goth e}_{-1}=0$. 
\end{theor}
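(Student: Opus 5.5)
This theorem restates the general odd-orthogonal Cayley–Hamilton identity from \cite{OP-ortho}, Section 4, specialised to RTT-algebras. In this note the natural "proof" is to invoke that result. I would therefore aim to explain why the general R-matrix argument applies here, and then verify it in components for $n=3$, $\ell=2$. In that case the identity reads
\[
T^{\overline 3}(g\,{\goth e}_0)-T^{\overline 2}\,{\goth e}_3+T^{\overline 1}(g\,{\goth e}_0)(q^2g)^2-T^{\overline 0}\,{\goth e}_3(q^2g)^2
-q\bigl\{T^{\overline 2}(g{\goth e}_1-{\goth e}_3)-T^{\overline 1}({\goth e}_3{\goth e}_1-g^2)\bigr\}=0 .
\]

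\textbf{Step 1 (R-matrix route).} I would follow the scheme of \cite{OP-BMW,OP-ortho}. First, use the BMW-algebra relations satisfied by $R_i$, $K_i$ and $A^{(i)}_R$ to derive the "matrix Newton/Cayley–Hamilton-type" recursions. These express $\Tr{2,\dots,k}\bigl(T_1\cdots T_k\,A^{(k)}_R R_{k-1}\cdots R_1\bigr)$ in terms of $T^{\overline{i}}$ times elementary sums, with additional terms coming from $K_R$ that involve the contraction $g$.

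Second, use that $A^{(n+1)}_R=0$, since for type $O(n)$ the $(n+1)$-st antisymmetrizer vanishes. This yields a relation that is not yet monic and involves both ${\goth e}_i$ and "contracted" terms.

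Third, eliminate the contracted terms using the reciprocal relations (\ref{reciprocal}) and the exchange relations (\ref{T-g-e3}), which let one move $g$ and ${\goth e}_n$ through powers of $T$. The geometric sums $\sum_r (q^2g)^{2r}$ arise when one iterates the "$T\mapsto gT^{-1}$"-type symmetry: each application of $K_R$ lowers the power by two and contributes a factor $q^2g$. A parity argument for odd $n$ then gives the folded form (\ref{orthoCH-odd}).

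\textbf{Step 2 (component check for $O(3)$).} Once the ordered quadratic relations of ${\cal F}(R)_{q,v}$ are available, I would verify the identity directly as a cross-check of Step 1:
\begin{itemize}
\item compute ${\goth e}_1=\Tr{}(D_RT)$, $g$ from (\ref{tau}) with $K_R$ given by (\ref{K}), and ${\goth e}_3$ from (\ref{A3});
\item compute $T^{\overline 2}$ and $T^{\overline 3}$ via (\ref{phi}) and (\ref{Tk}) using the explicit matrix (\ref{R-O3});
\item reduce every entry of the left-hand side to normal form in the ordered basis and check that it vanishes, using the orthogonality relations.
\end{itemize}

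\textbf{Main obstacle.} In Step 1 the hardest part is the elimination in the third stage: the contracted terms must telescope into the sums over $r$ with exactly the coefficients $(-q)^j$. I would first try to prove the two-term recursion relating consecutive $j$-blocks, and then sum it. In Step 2 the obstacle is normal ordering in the presence of the redundant orthogonality relations, which makes reducing cubic and quartic monomials heavy.
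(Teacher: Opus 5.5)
Your plan has the same structure as the paper's treatment. The paper does not re-derive the identity: it takes it from (4.61) of \cite{OP-ortho} and checks only the $O(3)$ instance (\ref{CH-O3}) in components with the ordered basis. Your Step~1 is likewise just an outline of that reference.

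\textbf{The gap is your $n=3$ specialization.} You took the factor $(q^2g)^{2r}$ literally, but in the displayed theorem it is a misprint for $(q^2g)^{r}$. The relations (\ref{RTT}) are homogeneous, so ${\cal F}(R)$ is graded: entries of $T^{\overline k}$ have degree $k$, $g$ has degree $2$, and ${\goth e}_j$ has degree $j$. With $(q^2g)^{2r}$, every term of the $r$-th block has degree $2n-1+2r$. The identity would therefore split into graded pieces, each of which must vanish separately.

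In your display the degree-$7$ piece is $(Tg-{\goth e}_3)\,q^4g^2$, and it is not zero. A diagonal $T={\rm diag}(t_1,t_2,t_3)$ with scalar entries satisfies (\ref{RTT}) as soon as $t_2^2=t_1t_3$. In this one-dimensional representation $g\mapsto q^{-4}t_2^2$ and ${\goth e}_3\mapsto q^{-6}t_2^3$. The $(3,3)$ entry of that piece then becomes $q^{-8}t_2^6\,(t_3-q^{-2}t_2)\neq 0$. So your Step~2 check would leave a nonzero remainder, and neither step can establish your display as written.

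With the correct factor $(q^2g)^r$, the case $n=3$ reads
\[
T^{\overline 3}g-T^{\overline 2}\bigl(q\,g\,{\goth e}_1+(1-q){\goth e}_3\bigr)+T\bigl(q\,{\goth e}_3{\goth e}_1+q(q-1)g^2\bigr)-q^2\,{\goth e}_3\,g=0 .
\]
This is (\ref{CH-O3}) after the reciprocal relation $g{\goth e}_2={\goth e}_1{\goth e}_3$. It is homogeneous of degree $2n-1=5$. At $q=1$ it becomes $g\,\chi(T)=0$, with $\chi$ the classical characteristic polynomial; in general one gets $g^{\ell-1}\chi(T)=0$.

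Your own heuristic in Step~1 already gives $(q^2g)^r$, not $(q^2g)^{2r}$: each use of $K_R$ lowers the power of $T$ by two and contributes one factor $q^2g$. A degree count would have exposed the inconsistency.

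After this correction, your Step~2 is exactly the paper's check, with two points to keep in mind:
\begin{itemize}
\item The reduction must also use the cubic diamond relations (\ref{diam-0})--(\ref{diam-5}) and (\ref{diam-0gen}). The ordering and orthogonality relations alone do not give a well-defined normal form.
\item Reaching $0$ is what proves the identity. So it does not matter that Theorem~\ref{theor1} gives only spanning and not linear independence.
\end{itemize}
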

We present the characteristic identity in unfactorized form: 
formula (\ref{orthoCH-odd}) is an expansion of equality (4.61) from \cite{OP-ortho}. Factorizing this equality requires the explicit use of the element $g^{-1}$, 
which appears unavoidable when defining the notion of eigenvalues of a quantum matrix (see \cite{OP-ortho}, Theorem 4.5 and Section 5). 
On the other hand, the unfactorized version (\ref{orthoCH-odd}) better serves the purpose of this work, that is,  verifying the characteristic identity using the specific example of the RTT algebra ${\cal F}(R)$.

\section{Standard $O(3)$ type RTT-algebra}

\subsection{Ordering of the generators and linear basis }
\lb{4.1}
\renewcommand{\theequation}{\thesubsection.\arabic{equation}}

In this Section we shall write in components the set of quadratic relations in the RTT-algebra ${\cal F}(R)$ with the R-matrix $R$ given by eqs. (\ref{R-O3}), (\ref{fix-u}). In the following, we will explicitly indicate the parameters $q$ and $v$ of the standard $R$-matrix in the notation for the algebra: ${\cal F}(R)_{q,v}$.
We use following notation for the components of the quantum  matrix $T$:
\[ 
T=\left(\begin{array}{ccc}x_1&a_+&c_+\\a_-&x_2&b_+\\c_-&b_-&x_3\end{array}\right)
\]

Analyzing quadratic relations (\ref{RTT})
we choose suitable ordering for the generators: 
\be
\lb{order}
(a_+,a_-,x_1)<(c_+,c_-,x_2)<(x_3,b_+,b_-) .
\ee
The relations inside the elements in each brackets are very easy, so, the orderings inside each bracket is of no
importance; we assume them anti-alphabetic inside each bracket, for example.

This ordering allows us to express unordered quadratic monomials of  generators as linear combinations of  ordered ones with  coefficients that are polynomials (not rational functions) in variables $q^{\pm 1}$, $v^{\pm 1}$.
The ordering relations within the triples in brackets are simple: 
\ba
\lb{trip-ord-1}
&&x_1a_+=\frac{q}{v}\,a_+x_1\, ,\hspace{15mm} x_2c_+=\frac{1}{v^2}\,c_+x_2\ ,
\hspace{15mm} b_+x_3 = qv\, x_3b_+\ ,
\\[2pt]
\lb{trip-ord-2}
&&x_1a_-=qv\, a_-x_1\, ,\hspace{13.5mm} x_2c_-=v^2\, c_-x_2\, ,
\hspace{16.5mm} b_- x_3 =\frac{q}{v}\, x_3b_-\, ,
\\[2pt]
\lb{trip-ord-3}
&&a_-a_+=\frac{1}{v^2}\, a_+a_-\, ,\hspace{12mm} c_-c_+=\frac{1}{v^4}\, c_+c_-\, ,
\hspace{15mm} b_-b_+=\frac{1}{v^2}\,b_+b_-\, .
\ea
We don't specify the order within the triplets. We usually use them in the order they are given in (\ref{order}), but sometimes we rearrange them to apply other  (orthogonality or diamond) relations.
\smallskip

Ordering relations between $c_\pm$ and all other matrix elements are also simple
\ba
\lb{ord-1}
c_+x_1=\frac{v^2}{q^2}\, x_1c_+\, ,& &x_3c_+=\frac{1}{q^2v^2}\,c_+x_3\,,
\\
\lb{ord-2}
c_-x_1=\frac{1}{q^2v^2}\,x_1c_-\,,&&x_3c_-=\frac{v^2}{q^2}\,c_-x_3\,,
\\
\lb{ord-3}
c_+a_+=\frac{v}{q}\,a_+c_+\,,&&b_+c_+=\frac{1}{qv}\,c_+b_+\,,
\\
\lb{ord-4}
c_-a_+=\frac{1}{qv^3}\,a_+c_-\,,&&b_+c_-=\frac{v^3}{q}\,c_-b_+\,,
\\
\lb{ord-5}
c_+a_-=\frac{v^3}{q}\,a_-c_+\,,&&b_-c_+=\frac{1}{qv^3}\,c_+b_-\,,
\\
\lb{ord-6}
c_-a_-=\frac{1}{qv}\,a_-c_-\,,&&b_-c_-=\frac{v}{q}\,c_-b_-\, .
\ea

Remaining ordering relations are
\ba
\lb{rem-ord-1}
x_2a_+=\frac{1-\lambda}{v}\,a_+x_2\,-\,\frac{\lambda}{v}\,x_1b_+\ ,&&
x_2a_-=v(1-\lambda)\,a_-x_2\,-\,\frac{\lambda v}{q}\,x_1b_-\ ,\\
\lb{rem-ord-2}
b_+x_2=v(1-\lambda)\,x_2b_+ \,-\,\frac{\lambda v}{q}\,a_+x_3\ ,&&
b_-x_2=\frac{1-\lambda}{v}\,x_2b_-\, -\,\frac{\lambda}{v}\,a_-x_3\ ,\\
\lb{rem-ord-3}
b_+x_1=\frac{v}{q}(1+\lambda)\,x_1b_+ \,+\,\frac{\lambda v}{q}\,a_+x_2\ ,&&
b_-x_1=\frac{1+\lambda}{qv}\,x_1b_- \,+\,\frac{\lambda }{v}\,a_-x_2\ ,\\
\lb{rem-ord-4}
x_3a_+=\frac{1+\lambda}{qv}\,a_+x_3 \,+\,\frac{\lambda }{v}\,x_2b_+\ ,&&
x_3a_-=\frac{v}{q}(1+\lambda)\,a_-x_3 \,+\,\frac{\lambda v}{q}\,x_2b_-\ ,
\ea
\vspace{-7mm}
\ba
\lb{rem-ord-5}
&&x_2x_1=x_1x_2\,-\,\frac{\lambda}{v}\,a_+a_-\ ,\hspace{15mm}
x_3x_2=x_2x_3\,-\,\frac{\lambda}{v}\,b_+b_-\ ,\\[3pt]
\lb{rem-ord-6}
&&x_3x_1=x_1x_3\,+\,\lambda(q-1)(x_2^2-x_1x_3)\,-\,\lambda(q-3+q^{-1})\,a_+b_-\ ,\\[3pt]
\lb{rem-ord-7}
&&b_+a_+=a_+b_+ \,-\,\frac{\lambda}{v}\,c_+x_2\ ,\hspace{15mm}
b_-a_-=a_-b_- \,-\,\lambda v\,c_-x_2\ ,\\[3pt]
\lb{rem-ord-8}
&&b_+a_-=v^2(q+\lambda(1-q))a_-b_+\, +\,\lambda v^2(x_2^2-x_1x_3)\ ,\\[3pt]
\lb{rem-ord-9}
&&b_-a_+=\frac{1}{v^2}(q+\lambda(1-q))a_+b_- +\frac{\lambda q}{v^2}(x_2^2-x_1x_3)\ .
\ea

\noindent
In total, we have $36=9\cdot 8/2=(5+1)\cdot 3+3\cdot (5+1)$ 
ordering relations. Indeed, 
the R-matrix (\ref{R-O3}) constitutes a decomposition of the space $V\otimes V$ into direct sum of subspaces of dimensions 1, 3, and 5, corresponding to the eigenvalues of $R$, respectively, $q^{-2}$, $-q^{-1}$ and $q$.
A pair of subspaces, of dimensions 5 and 1, in combination gives a symmetric subspace in $V^{\otimes 2}$, while the subspace of dimension 3 represents an antisymmetric component of $V^{\otimes 2}$.\vspace{3pt}

In addition, we have $10=5\cdot 1+1\cdot 5$ {\em orthogonality relations}. These relations are contained within invariant subspaces of ${\rm End}(V\otimes V)$ that arise from the interaction between the rank-1 and rank-5 components of the symmetrizer:
\ba
\lb{ort-1}
a_+^2=-\displaystyle{\frac{1+q}{q}\, x_1c_+\ ,}&&
a_-^2=-\displaystyle{\frac{1+q}{q^2}\,x_1c_-\ ,}\\
\lb{ort-2}
b_+^2=-\displaystyle{\frac{1+q}{q^2}\,c_+x_3\ ,}&&
b_-^2=-\displaystyle{\frac{1+q}{q}\,c_-x_3\ ,}\\
\lb{ort-3}
a_-c_+=-\frac{1}{v}\,x_1b_+-\frac{1}{v} a_+x_2\ ,&&
a_+c_-=-v\,x_1b_- \,-\,qv\,a_-x_2\ ,\\
\lb{ort-4}
c_-b_+=-\frac{1}{v}\,x_2b_- \,-\,\frac{1}{v}\, a_-x_3\ ,&&
c_+b_-=-qv\,x_2b_+ \,-\,v\,a_+x_3\ ,\\
\lb{ort-5}
a_-b_+=\displaystyle{\frac{1}{q}\,a_+b_-\ ,}&&
c_+c_-=(2-q)v^2\,a_+b_-\,+\,qv^2(x_2^2-x_1x_3)\ .
\ea
\begin{rem}{\rm
Note that for the algebra of coordinate functions on the classical orthogonal group $O(3)$, there exist 6 orthogonality relations, which follow from the condition $T^t \Omega T = \Omega$ for a symmetric $3 \times 3$ matrix. Here, $T^t$ is the transpose of the matrix $T$, and $\Omega$ is the matrix of a symmetric quadratic form.  We recall that for the quantization we use the symmetric form 
$\footnotesize{\Omega=\left(\begin{array}{ccc}0&0&1\\0&1&0\\1&0&0\end{array}\right)}$. 
Equivalently, the condition $T^t \Omega T = \Omega$ can be written as $T \Omega T^t = \Omega$. This equivalence stems from the invertibility of the orthogonal matrix $T$ and the property $\Omega$ squares to the identity matrix.
		
In our setting, the orthogonality relations in the classical case (i.e., when $R$ is a permutation matrix) take the form $T^t \Omega T = g \Omega$ and $T \Omega T^t = g \Omega$, where $g$ is the contraction defined in (\ref{tau}).  These two matrix relations are independent, provided that $g$ (and, consequently, $T$) is not invertible. Each of these matrix relations yields 5 separate equations (one fewer than 6, since  $g$ is not fixed), and together they form a total of 10 orthogonality relations.

The same phenomenon also for symplectic QM-algebras as well; see \cite{OP-ortho}, the discussion following equation (4.37).
\phantom{a}\hfill$\blacktriangleright$
}
\end{rem}
\begin{rem}{\rm It is worth noting a remarkable consistency between the ordering relations and the orthogonality relations, in the following sense. Let us apply the diamond lemma \cite{Bo,Be} to the ordering relations. Unlike in the $GL$ case (with the standard $R$-matrix $R$), nontrivial cubic relations arise here; that is, a basis of the algebra defined by the ordering relations (\ref{trip-ord-1})--(\ref{rem-ord-9}) is formed not simply by all ordered monomials, but some ordered monomials must be excluded. Remarkably, however, all the cubic relations resulting from the ordering relations turn out to be trivialized (that is, reduced to the identities $0=0$) by the orthogonality relations; moreover, all of the orthogonality relations are involved!
\phantom{a}\hfill$\blacktriangleright$
}
\end{rem}

We notice that the ordering and the orthogonality relations are invariant under an involutive ${\cal F}_{q,v}$ algebra anti-automorphism
\be
\lb{antihom}
a_\pm \leftrightarrow b_\mp, \quad c_+ \leftrightarrow c_-, \quad x_1 \leftrightarrow x_3, \quad x_2 \leftrightarrow x_2.  
\ee
It corresponds to the reflection of the quantum matrix $T$ relative to its central component $x_2$.

Moreover, there exists an involutive homomorphism on the family of algebras ${\cal F}(R)_{q,v}\leftrightarrow {\cal F}(R)_{q,v^{-1}}$, defined on the generators as
\be
\lb{hom}
a_+ \leftrightarrow a_-, \quad b_+ \leftrightarrow q^{-1} b_-, \quad c_+ \leftrightarrow q^{-1} c_-, \quad x_i \leftrightarrow x_i,\,\, i=1,2,3.  
\ee
This homomorphism corresponds to the transposition of the quantum matrix $T$.

Finally, there is a two-parameter family of scaling automorphisms of the algebra ${\cal F}(R)_{q,v}$:
\be
\lb{scale2}
a_\pm \mapsto t_1^{\pm 1} t_2\,a_\pm, \quad 
b_\pm \mapsto t_1^{\pm 1} t_2^{-1} b_\pm, \quad c_\pm \mapsto t_1^{\pm 2} c_\pm, \quad x_1 \mapsto t_2^2 x_1, 
\quad x_2 \mapsto x_2,  \quad x_3 \mapsto t_2^{-2}x_3 .  
\ee
The scaling parameters $t_1$ and $t_2$ measure the oriented distance of the generator from the main and secondary diagonals, respectively, in the quantum matrix $T$. These homomorphisms define an additive  ${\Bbb Z}^2$-grading on the algebra  ${\cal F}(R)_{q,v}$.
\ba
\lb{deg2}
{\rm grad\,}a_\pm = (\pm 1 ,1), & \quad
{\rm grad\,}b_\pm =(\pm 1 ,-1), \quad {\rm grad\,}c_\pm  = (\pm 2 ,0),\\
{\rm grad\,}x_1 =(0 ,2),& \,\,\,{\rm grad\,}x_2  =(0 ,0),  \qquad\,\, {\rm grad\,}x_3 =(0 ,-2) .  
\ea
This bi-grading is also explained by the commutation relations of $\rho:=c_+c_-$
with the generators:
$$\rho a_+=v^{-2}q^{-2}a_+\rho\ ,\ \rho a_-=v^2q^{-2}\rho\ ,\ \rho x_1=q^{-4}x_1\rho\ ,\ 
\rho c_+=v^{-4}c_+\rho\ ,\ \rho c_-=v^4 c_-\rho\ ,\ \rho x_2=x_2\rho\ ,$$
$$\rho x_3=q^4 x_3\rho\ ,\ \rho b_+=v^{-2}q^2 b_+\rho\ ,\ \rho b_-=v^2q^2b_-\rho \ .$$

Let us consider the set of cubic polynomials in generators of ${\cal F}(R)_{q,v}$. Using the ordering and the orthogonality relations one can derive following relations among seemingly irreducible (= ordered and not subject to orthogonality reduction) cubic monomials:
\ba
\lb{diam-0} 
a_- x_i b_+ &=& \frac{1}{q v^2}\,a_+ x_i b_-\ ,\qquad i=1,2,3, \\
\lb{diam-1}
a_+ a_- x_2 &=& (1-\lambda)\, a_+ x_1 b_-\, +\, \frac{v(1+q)}{q}\, x_1(x_2^2-x_1x_3)\ , \\
\lb{diam-2}
a_+ a_- b_- &=& \frac{v(1+q)}{q }\ (x_1 x_2 b_-\, +\, q v\, a_- x_1 x_3)\ , \\
\lb{diam-3}
a_+ b_+ b_- &=& \frac{v(1+q)}{q }\ (a_+ x_2 x_3\, +\,q v\, x_1 x_3 b_+)\ , \\
\lb{diam-4}
x_2 b_+ b_- &=& (1-\lambda)\, a_+ x_3 b_-\, +\, \frac{v(1+q)}{q }\ (x_2^2-x_1x_3) x_3\ ,\\[.8em]
\lb{diam-5}
x_1 b_+ b_- &=& a_+ a_- x_3\ .
\ea
We call these cubic relations the {\em diamond relations}. Let us comment on their derivation. 
\smallskip

Consider three reducible
cubic monomials $\underline{x_1 a_-} b_+$, $a_- \underline{b_+ x_3}$, $\underline{x_2 a_-} b_+$. For each one of them there are two different ways to bring it to an irreducible form. One way starts from the ordering of the underlined pairs in these monomials. In another way one uses the orthogonality relations for the pair $a_- b_+$ first. Comparing the results of the reduction one obtains equalities (\ref{diam-0}).

Relations (\ref{diam-1})--(\ref{diam-5}) are obtained in the same way, that is, by comparing results of the two different reductions of the monomials
$b_- x_1 a_+$, $a_- a_- b_+$, $a_- b_+ b_+$, $x_3 b_+ a_-$ and $x_3 x_2 x_1$, respectively.
\medskip

From now on we call {\em irreducible} the monomial that can not be transformed by an application of the ordering relations (\ref{ord-1})--(\ref{rem-ord-9}), the orthogonality relations (\ref{ort-1})--(\ref{ort-5}), the diamond relations (\ref{diam-0})--(\ref{diam-5}) and their consequent equalities (\ref{diam-0gen}) even after reshuffling its terms in the triples $(a_+, a_-, x_1)$, $(c_+, c_-, x_2)$, $(x_3,b_+,b_-)$ with the use of relations (\ref{trip-ord-1})--(\ref{trip-ord-3}). We shall arrange factors in these monomials in the order (\ref{order}). 

The complete set of the irreducible monomials in ${\cal F}(R)_{q,v}$ is given below.
\ba
\lb{list1}
1) && x_1^a\, c_\pm^b\, x_2^c\, x_3^d\,, 
\\[2pt]
\lb{list2}
2)  &&  a_\pm\,x_1^a\, c_\pm^b\, x_2^c\, x_3^d\,, 
\\[2pt]
\lb{list3}
3)  &&  x_1^a\, c_\pm^b\, x_2^c\, x_3^d\,  b_\pm\,, 
\\[2pt]
\lb{list4}
4)  &&  a_\pm\, x_1^a\, c_\pm^b\, x_2^c\, x_3^d\,  b_\pm\,,\hspace{34mm} \forall\,a,b,c,d\geq 0\, , 
\\[2pt]
\lb{list5}
5)  &&  a_+\, x_1^a\, x_2^b\, x_3^c\,  b_-\, \sim\, a_-\, x_1^a\, x_2^b x_3^c\,  b_+\,, \hspace{10mm} \forall\,a,b,c\geq 0\, , 
\\[2pt]
\lb{list6a}
6a)  &&  a_+\, a_-\, x_1^a\, x_3^{b+1}\, \sim \, x_1^{a+1}\, x_3^{b}\,  b_+\, b_-\,, \hspace{11mm} \forall\,a,b\geq 0\, ,
\\[2pt]
\lb{list6b}
6b)  &&  a_+\, a_-\, x_1^a ,\,\,\,    x_3^a\, b_+\, b_-\,,  \hspace{29mm} \forall\,a\geq 0\, .\hspace{10mm}
\ea
Here the symbol $\sim$ means equality up  to numeric factor. The equality in (\ref{list6a}) 
follows by application of the diamond relation (\ref{diam-5}) after transposition of $a_+ a_-$ with $x_1^a$. The equality in  (\ref{list5}) follows from 
\begin{lem}
\lb{lem1}
Let $Z^{(n)}$ denote arbitrary $n$-th order monomial in generators $c_\pm$ and $x_i$, $i=1,2,3$ 
The following relations hold true
	\be
	\lb{diam-0gen}
	a_- Z^{(n)} b_+ = {1\over q v^{2n}}\, a_+ Z^{(n)} b_-, \quad\,\, a_- Z^{(n)} a_+ =  {1\over v^{2(n+1)}}\, a_+ Z^{(n)} a_-.
	\ee
\end{lem}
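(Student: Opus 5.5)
The plan is to argue by induction on $n$, proving both relations in (\ref{diam-0gen}) together with their images under the anti-automorphism (\ref{antihom}). Under (\ref{antihom}) the second relation becomes $b_- \tilde Z^{(n)} b_+ = v^{-2(n+1)}\, b_+ \tilde Z^{(n)} b_-$, where $\tilde Z^{(n)}$ is again a monomial in $c_\pm, x_i$. The first relation is mapped to itself, up to relabelling.

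\textbf{Base cases.}
\begin{itemize}
\item For $n=0$, the first relation is the orthogonality relation $a_-b_+=\frac1q a_+b_-$ from (\ref{ort-5}).
\item For $n=0$, the second relation is $a_-a_+=v^{-2}a_+a_-$ from (\ref{trip-ord-3}); its mirror is $b_-b_+=v^{-2}b_+b_-$.
\item For $n=1$ with $Z=x_i$, the first relation is exactly (\ref{diam-0}).
\item For $n=1$ with $Z=c_\pm$, I would derive the first relation as (\ref{diam-0}) was derived. One reduces $a_- c_\pm b_+$ in two ways: either first use (\ref{ord-5}), (\ref{ord-3}) and (\ref{ord-4}) to move $c_\pm$ through $a_-$ or $b_+$, or first apply (\ref{ort-3}) and (\ref{ort-4}). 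One then compares the results.
\end{itemize}

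\textbf{Induction step.} Write $Z^{(n)}=z\,Z^{(n-1)}$ with $z\in\{c_\pm,x_1,x_2,x_3\}$, and move $z$ to the left of $a_\mp$ using the ordering relations.
\begin{itemize}
\item For $z=c_\pm$ and $z=x_1$, the relations (\ref{trip-ord-1}), (\ref{trip-ord-2}), (\ref{ord-1}), (\ref{ord-2}), (\ref{ord-5}) and (\ref{ord-6}) are diagonal. So $a_-z=\alpha_-(z)\,z\,a_-$ and $a_+z=\alpha_+(z)\,z\,a_+$. The induction hypothesis for $a_- Z^{(n-1)} b_+$, applied after $z$ is pulled out, gives the claim provided $\alpha_-(z)/\alpha_+(z)=v^{-2}$.
\item I expect the ratio $v^{-2}$ to be exactly what the $\mathbb Z^2$-grading (\ref{deg2}) predicts. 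For example, for $x_1$ the coefficients are $qv$ versus $q/v$. I would check this case by case; the relation $\rho a_\pm = v^{\mp2}q^{-2}a_\pm\rho$ is a consistency check.
\item For $z=x_2$ and $z=x_3$, the relations (\ref{rem-ord-1}) and (\ref{rem-ord-4}) produce correction terms. For instance, $a_-x_2$ equals a multiple of $x_2a_-$ plus a multiple of $x_1b_-$.
\item This leaves terms of the form $x_1\, b_- Z^{(n-1)} b_+$ on the left-hand side and $x_1\, b_+ Z^{(n-1)} b_-$ on the right-hand side. These are handled by the mirrored second relation at level $n-1$, which is part of the induction hypothesis.
\end{itemize}
The second relation of (\ref{diam-0gen}) is treated in the same way, peeling $z$ off and using the already established first relation for the correction terms.

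\textbf{Main obstacle.} I expect the hardest step to be the bookkeeping of these correction terms for $z=x_2,x_3$. The coefficients such as $\frac{\lambda v}{q}$ versus $\frac{\lambda}{v}$, combined with the factors $v^{-2(n-1)}$ or $v^{-2n}$ from the hypothesis, must reproduce exactly the factor $\frac1{qv^{2n}}$. This also requires the main terms and the correction terms to carry the same power of $v$.

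If the direct check becomes unwieldy, I would try an alternative. One could instead peel $z$ off from the right, next to $b_\pm$, using (\ref{rem-ord-2}) and (\ref{rem-ord-3}), and choose for each $z$ whichever side gives diagonal relations. For $x_3$ this looks best from the $b$ side, via (\ref{trip-ord-1}) and (\ref{trip-ord-2}). Only $x_2$ would then need the correction-term argument.
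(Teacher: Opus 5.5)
\textbf{The first relation is handled correctly; the step for the second relation does not close as written.}

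Your treatment of the first relation is sound, and it is a fleshed-out version of the paper's one-line induction. The diagonal ratios $\alpha_-(z)/\alpha_+(z)$ are indeed $v^{-2}$ for $z=x_1,c_+,c_-$. For $z=x_2$ (resp.\ $z=x_3$) peeled off the left, the correction terms $x_1b_\mp Z'b_\pm$ (resp.\ $x_2b_\mp Z'b_\pm$) match exactly via the mirrored second relation at level $n-1$. This requires dividing by $v(1-\lambda)$ (resp.\ $\frac{v}{q}(1+\lambda)$), which is harmless for generic $q$. The separate base case $Z=c_\pm$ is superfluous, since the diagonal step from (\ref{ort-5}) already gives it.

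The gap is in your one sentence on the second relation. If you peel $z$ off the left ``in the same way'', then for $z=x_2$ relation (\ref{rem-ord-1}) gives
\[
a_-x_2Z'a_+=\frac{1}{v(1-\lambda)}\Bigl(x_2a_-Z'a_++\frac{\lambda v}{q}\,x_1b_-Z'a_+\Bigr),\qquad
a_+x_2Z'a_-=\frac{v}{1-\lambda}\Bigl(x_2a_+Z'a_-+\frac{\lambda}{v}\,x_1b_+Z'a_-\Bigr).
\]
So the correction terms are sandwiches $b_-Z'a_+$ and $b_+Z'a_-$; for $z=x_3$ you similarly get $x_2b_\mp Z'a_\pm$. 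Neither the first relation, nor the second, nor its mirror under (\ref{antihom}) says anything about such $b\cdots a$ sandwiches. Hence ``using the already established first relation for the correction terms'' is not available on this route.

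\textbf{Repair.} Peel the second relation from the right instead. Write $Z=Z'z$ and push $z$ through the final $a_+$ (resp.\ $a_-$). Use (\ref{trip-ord-1}), (\ref{trip-ord-2}), (\ref{ord-3})--(\ref{ord-6}), (\ref{rem-ord-1}), (\ref{rem-ord-4}) in their natural ordering direction, so nothing has to be inverted. For $z=x_2$ this gives
\[
a_-Z'x_2a_+=\frac{1-\lambda}{v}\,a_-Z'a_+x_2-\frac{\lambda}{v}\,a_-Z'x_1b_+,\qquad
a_+Z'x_2a_-=v(1-\lambda)\,a_+Z'a_-x_2-\frac{\lambda v}{q}\,a_+Z'x_1b_- .
\]
Multiply the second identity by $v^{-2(n+1)}$. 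The leading terms then match by the second relation at level $n-1$. The corrections match by the first relation at level $n$ applied to $Z'x_1$. For $z=x_3$ the same works, with the correction involving $Z'x_2$. For $z=x_1,c_\pm$ the relations are diagonal with the right ratio.

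With this change the induction order is consistent: prove the first relation at level $n$ using the mirrored second relation at level $n-1$, then the second relation at level $n$ using the first at level $n$. That proves the lemma.
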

\noindent {\bf Proof.}~
The proof is by induction on $n$.
The first relation in (\ref{diam-0gen}) generalizes eqs.(\ref{diam-0}). 
The second relation in (\ref{diam-0gen}) is required in the proof of the first one in case  if $Z^{(n)}$ contains generator $x_2$.
\phantom{a}\hfill$\blacktriangleright$\medskip

We note that the set of monomials (\ref{list1})--(\ref{list6b}) is $\sim$--invariant under the (anti-)homomorphisms (\ref{antihom}), (\ref{hom}) up to obvious transpositions (\ref{trip-ord-1})--(\ref{trip-ord-3})  
within the triples of generators.

\begin{theor} 
\lb{theor1}
Using relations (\ref{trip-ord-1})--(\ref{ort-5}) and (\ref{diam-0})--(\ref{diam-5}) a finite number of times, every monomial in generators $a_\pm$, $b_\pm$, $c_\pm$, $x_i$, $i=1,2,3$, of the algebra ${\cal F}(R)_{q,v}$ can be expressed as a linear combination of the irreducible ones (\ref{list1})--(\ref{list6b}).
\end{theor}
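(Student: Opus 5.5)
We argue by a terminating rewriting procedure. The plan has three parts: choose a well-ordering on monomials compatible with all rewriting rules, show every reducible monomial can be rewritten, and show each application strictly decreases the monomial in that ordering.

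\emph{The ordering.} Each relation (\ref{trip-ord-1})--(\ref{ort-5}), (\ref{diam-0})--(\ref{diam-5}) and (\ref{diam-0gen}) is homogeneous of total degree and of bidegree (\ref{deg2}). Hence the process preserves the finite-dimensional space of monomials of a fixed degree $N$ and bidegree, and we may work inside it. On monomials of degree $N$ we use a lexicographic-type order:
\begin{itemize}
\item[(i)] first, the number of \emph{extreme} letters $a_\pm,b_\pm$, where fewer is smaller;
\item[(ii)] then the number of inversions with respect to the block order (\ref{order}) ($a,x_1$-block $<$ $c,x_2$-block $<$ $x_3,b$-block);
\item[(iii)] then a finer order inside the middle block, counting inversions of $x_2$ past $x_1,x_3$ and the number of $x_2$'s.
\end{itemize}

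\emph{Decrease under each relation.} We check the relations against this order one group at a time.
\begin{itemize}
\item The ordering relations replace an inverted pair by the ordered pair plus correction terms. The corrections either have fewer $a,b$ letters, as in (\ref{rem-ord-5}), (\ref{rem-ord-7})--(\ref{rem-ord-9}), which trade $a_+a_-$ or $a_+b_-$ for $x$'s or $c\,x_2$, or have the same letters with fewer block inversions, as in (\ref{rem-ord-1})--(\ref{rem-ord-4}).
\item The relation (\ref{rem-ord-6}) for $x_3x_1$ needs care, because it produces $x_2^2$ and $a_+b_-$. We give $x_2$ the middle block, so $x_2^2$ has no inversion. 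We also count $a_+b_-$ as ordered but as using extreme letters, and place criterion (i) \emph{below} the inversion count for this particular case. Equivalently, we use a weighted degree: assign weight $1$ to $x_1,x_3$, weight $0$ to $x_2,c_\pm$, and weight $1/2$ to $a_\pm,b_\pm$, and measure inversions weighted by these. I expect to fiddle with the weights so that every relation is strictly decreasing.
\item The orthogonality relations (\ref{ort-1})--(\ref{ort-5}) remove squares $a_\pm^2,b_\pm^2$ and the mixed products $a_-c_+$ and so on. They reduce the count of extreme letters, or replace $a_-b_+$ by $a_+b_-$. For the latter we add a final tie-break preferring $a_+$ and $b_-$.
\item The diamond relations (\ref{diam-1})--(\ref{diam-5}) and Lemma \ref{lem1} are used only on ordered monomials. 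They either lower the number of extreme letters or apply the same $+/-$ tie-break, as in (\ref{diam-5}) and (\ref{diam-0gen}).
\end{itemize}

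\emph{Normal forms.} Next we show that any monomial to which no rule applies, even after the harmless permutations inside triples, lies in the list (\ref{list1})--(\ref{list6b}). Take an ordered monomial $A\,M\,B$, where $A$ is a word in the first block, $M$ in the middle block and $B$ in the last block.
\begin{itemize}
\item By (\ref{ort-1})--(\ref{ort-2}), $A$ contains at most one $a_+$ and one $a_-$, and $B$ at most one $b_+$ and one $b_-$.
\item By (\ref{ort-3})--(\ref{ort-5}), the occurring $c$'s can be moved next to $a$ or $b$ via (\ref{ord-3})--(\ref{ord-6}). This forces sign compatibility: with $a_\pm$ only $c_\pm$ of the same sign survive, and $c_+c_-$ never appears.
\item If both $a_+$ and $a_-$ appear, the diamond relations (\ref{diam-1}), (\ref{diam-2}), (\ref{diam-3}) forbid $x_2$, $b_-$ and $b_+$ respectively. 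Relation (\ref{diam-5}) then gives cases 6a and 6b.
\item A mixed-sign pair $a_\mp\ldots b_\pm$ reduces by (\ref{diam-0gen}) to case 5. Here (\ref{diam-4}) excludes $x_2b_+b_-$, and $c$'s are excluded by (\ref{ort-3})--(\ref{ort-4}).
\end{itemize}
This case analysis is routine once organized by which of $a_\pm,b_\pm$ occur.

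\emph{Main obstacle.} The hard step is finding a single well-founded order in which \emph{every} rule, including (\ref{rem-ord-6}), (\ref{rem-ord-8}), (\ref{rem-ord-9}) and the diamond relations, is strictly decreasing. My first attempt is the weighted inversion count refined by the number of extreme letters and the $+/-$ tie-break. If that fails, I would instead induct on degree and on the bidegree filtration by $\rho=c_+c_-$-weight, proving termination inside each finite-dimensional graded piece. Only termination is claimed, not independence of the result from the choices, so no overlap (diamond) check is required beyond using the listed relations.
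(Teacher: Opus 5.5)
\textbf{There is a genuine gap: termination is never established, and the proposed order is not decreasing.} The theorem is a pure termination statement, and termination is exactly what your proposal leaves open. Several items of your checklist are false:
\begin{itemize}
\item The correction terms in (\ref{rem-ord-5}) are $a_+a_-$ and $b_+b_-$, and the second relation of (\ref{ort-5}) produces $a_+b_-$ from $c_+c_-$. These rules \emph{raise} the number of extreme letters from $0$ to $2$, which violates your primary criterion (i).
\item The rules (\ref{ort-3}), (\ref{ort-4}), (\ref{diam-1}), (\ref{diam-4}), (\ref{diam-5}) have terms that keep that number fixed, are not $\pm$ tie-breaks, and change the block content. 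So criterion (ii) can increase in context: $a_-c_+x_2\mapsto x_1b_+x_2$ goes from $0$ to $1$ block inversion.
\item The $x_2^2$ term of (\ref{rem-ord-6}) turns $x_3x_3x_1x_1$ ($4$ inversions) into $x_3x_2x_2x_1$ ($5$ inversions).
\end{itemize}
Putting (i) below (ii) ``for this particular case'' does not define an order. A termination order must be fixed once and for all and be compatible with multiplication on both sides. Your fallback fails too: finite-dimensionality of the graded pieces does not stop rewritings from cycling among finitely many words. It gives nothing without the acyclicity argument that is missing.

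\textbf{Tuning the order will not fix this.} Suppose an additive weight $w$ is non-increasing on every term of every rule. The terms $x_1b_+$ of (\ref{rem-ord-1}) and $a_+x_3$ of (\ref{rem-ord-2}) give $w(x_1)+w(x_3)\le 2w(x_2)$, while the term $x_2^2$ of (\ref{rem-ord-6}) gives the reverse inequality. Pushing this through (\ref{rem-ord-5})--(\ref{ort-5}) forces $w$ to be a combination of total degree and the bidegree (\ref{deg2}), so $w$ is constant on every rule. (Read additively, your own choice of weights even increases along $x_2a_+\mapsto x_1b_+$.) Lexicographic tie-breaks also fail. The list contains $a_\pm x_2$ and $x_2b_\pm$ but not $a_\mp c_\pm$ or $c_\mp b_\pm$. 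Hence, via (\ref{ort-3}), a left-lex order would need both $a_->a_+$ and $a_+>a_-$; via (\ref{ort-4}), a right-lex order hits the same contradiction for $b_\pm$.

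\textbf{How the paper gets termination.} It avoids searching for a global order. It inducts on degree, multiplies each irreducible degree-$n$ monomial by a generator, and reduces using the explicit commutation formulas of Lemmas \ref{x1^n}, \ref{x2^n} and Corollaries \ref{cor-1}, \ref{cor-2}. Each of these is proved by its own induction on an exponent; for example, in Corollary \ref{cor-2} the only reducible term contains $x_2^{n-2(i+1)}b_+b_-$, with a lower power of $x_2$. Finiteness is thus built into the computation. To close the gap, you must either adopt such nested inductions or construct a genuinely new order and check it on every rule, including (\ref{diam-0gen}) for $Z^{(n)}$ of arbitrary length.

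Your normal-form bookkeeping is fine in outline. One slip: $a_+a_-b_+$ is reduced by (\ref{ort-5}) and (\ref{ort-1}), not by (\ref{diam-3}), which concerns $a_+b_+b_-$.
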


Before we prove the theorem,
let's derive ordering relations between certain combinations of generators of
${\cal F}(R)_{q,v}$ with powers of $x_i$ (see two lemmas and two corollaries below).
\begin{lem} Permutation relations for $x_1^n$,  $n>0$, have the form:
\lb{x1^n}
\ba
\lb{x2x1}
x_2\, x_1^n &=&  x_1^n\, x_2 \, +\,  {1-q^{2n}\over v\, q}\, a_+\, a_-\,x_1^{n-1}\, , 
\\[6pt]
\lb{bpx1}
b_+\, x_1^n &=&  {v^n(q^{2n}+q^{2n-1}-1)\over q^{3n-1}}\, x_1^n\, b_+\, +\,  {v\,(q^{2n}-1)\over q^{2n}}\, a_+\, x_1^{n-1} x_2\, ,
\\[6pt]
\nn
 x_3\, x_1^n &=& x_1^n\, x_3\,+\,{(1-q^{2n})(1-q^{2n-1})\over q^{4n-3}}\, x_1^{n-1}\,(x_2^2-x_1 x_3)
\\[1pt]
\lb{x3x1}
&&\hspace{8mm} +\, {(1-q^{2n})\left[(2-q)(1-q^{2n-1}) -q^{2n-2}\right]\over v^{n-1} q^{3n-1} }\, a_+\, 
x_1^{n-1}\, b_-\, ,
\\[6pt]
\nn
(x_2^2-x_1 x_3)\, x_1^n &=& {1+q(1+q)(2-q)(1-q^{2n})\over q^{4n}}\, x_1^n\,(x_2^2-x_1 x_3)
\\[1pt]
\lb{x123x1}
&&\hspace{8mm} +\, {\left[1+q(1+q)(2-q)\right](2-q)(1-q^{2n})\over v^n q^{3n+1} }\, a_+\, x_1^n\, b_-\, .
\ea
\end{lem}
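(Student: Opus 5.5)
The plan is to prove all four formulas simultaneously by induction on $n$, using the obvious decomposition $Y x_1^{n}=(Y x_1^{n-1})\,x_1$. For $n=1$ they should be exactly the quadratic ordering relations, after rewriting the right-hand sides in the ordered form. Concretely, (\ref{x2x1}) for $n=1$ is (\ref{rem-ord-5}), since $(1-q^2)/(vq)=-\lambda/v$. Formula (\ref{bpx1}) for $n=1$ is (\ref{rem-ord-3}), using $v(q^2+q-1)/q^2=\frac{v}{q}(1+\lambda)$. Formula (\ref{x3x1}) for $n=1$ is (\ref{rem-ord-6}). Formula (\ref{x123x1}) for $n=1$ is obtained by combining (\ref{rem-ord-5}), (\ref{rem-ord-6}) and the $n=1$ cases of the others, which gives $x_2^2x_1$ and $x_1x_3x_1$. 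The base case consists of checking these coefficient identities.

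For the inductive step I assume the formulas for $n-1$ and multiply on the right by $x_1$. Each right-hand side then contains terms that must be pushed past the last $x_1$.

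\textbf{(\ref{x2x1}).} Here we need $x_2x_1$, given by (\ref{rem-ord-5}), and $a_+a_-x_1^{n-2}\cdot x_1$, which is already ordered. Summing the geometric-type coefficients gives $(1-q^{2n})/(vq)$. Note that the $x_1^{n-1}a_+a_-$ produced by (\ref{rem-ord-5}) must be moved left using (\ref{trip-ord-1}), (\ref{trip-ord-2}), which multiplies it by $(q/v)(qv)=q^2$. This is the source of the $q^{2n}$.

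\textbf{(\ref{bpx1}).} We need $b_+x_1$ from (\ref{rem-ord-3}), and then $a_+x_1^{n-2}x_2\,x_1$. For the latter we use the already proven (\ref{x2x1}) with $n=1$, followed by $a_+\cdot a_+a_-=$ an $a_+^2$ term. This term is reduced by the orthogonality relation (\ref{ort-1}), $a_+^2\sim x_1c_+$, and I expect it to combine with the $c_+$-free part so that everything closes. If an $x_1c_+a_-$-type term survives, it would have to cancel; I would check this via (\ref{ord-5}).

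\textbf{(\ref{x3x1}) and (\ref{x123x1}).} These two are coupled. Pushing $x_3$ through $x_1$ produces $x_2^2-x_1x_3$, and pushing $x_2^2-x_1x_3$ through $x_1$ produces $x_3x_1$ again together with $a_+\cdots b_-$ terms. So I would treat the pair $\bigl(x_3x_1^n,\ (x_2^2-x_1x_3)x_1^n\bigr)$ as a vector obeying a $2\times 2$ linear recursion, with inhomogeneous terms of the form $a_+x_1^{n-1}b_-$.

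These inhomogeneous terms arise from moving $a_+x_1^{k}b_-$ past $x_1$, which requires $b_-x_1$ from (\ref{rem-ord-3}). That in turn produces $a_+a_-x_1^{k}x_2$-type terms, to be eliminated by the diamond relation (\ref{diam-1}), which converts $a_+a_-x_2$ back into $a_+x_1b_-$ plus $x_1(x_2^2-x_1x_3)$.

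\textbf{Main obstacle.} The crux is this closed recursion, and in particular keeping track of the $v$-powers, which come from (\ref{trip-ord-1})–(\ref{trip-ord-2}) when $a_\pm$ pass the $x_1$'s. The closed forms should then be verified by checking that they satisfy the recursion, rather than derived from it. I would first verify the $n=2$ case explicitly to fix signs before doing the general coefficient check.
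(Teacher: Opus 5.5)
Your overall scheme is the paper's: induct via $Yx_1^{n+1}=(Yx_1^n)\,x_1$, extract linear recursions for the coefficients, and check that the closed forms satisfy them. Your treatment of (\ref{x2x1}), of the base cases, and of the terms $a_+x_1^kb_-\cdot x_1$ via (\ref{rem-ord-3}) and (\ref{diam-1}) is right.

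\textbf{The gap: the inductive step for (\ref{bpx1}).} Write that formula as $b_+x_1^n=v^nX_n\,x_1^nb_++vY_n\,a_+x_1^{n-1}x_2$. Applying (\ref{rem-ord-5}) to $vY_n\,a_+x_1^{n-1}x_2\,x_1$ produces the term $-\lambda Y_n\,a_+x_1^{n-1}a_+a_-$. After (\ref{trip-ord-1}) and (\ref{ort-1}) this is a multiple of $x_1^nc_+a_-$.

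This term does not cancel: it is the only one of its kind. Applying (\ref{ord-5}) merely turns it into $x_1^na_-c_+$, which is ordered with respect to (\ref{order}) and would look irreducible. The missing ingredient is the orthogonality relation (\ref{ort-3}), $a_-c_+=-\frac{1}{v}\bigl(x_1b_++a_+x_2\bigr)$. With it you get
\[
a_+x_1^{n-1}a_+a_-=\Bigl(\frac{v}{q}\Bigr)^{n-1}\frac{(1+q)v^2}{q^2}\Bigl(x_1^{n+1}b_++\Bigl(\frac{q}{v}\Bigr)^{n}a_+x_1^nx_2\Bigr).
\]
To get this cleanly, first move the left $a_+$ rightwards through $x_1^{n-1}$, so that no unordered $b_+x_1$ is regenerated.

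This term contributes $-\lambda q^{-n-1}(1+q)Y_n$ to $X_{n+1}$, and it turns the coefficient $Y_n$ inside $Y_{n+1}$ into $(q^{-2}-\lambda)Y_n$. The result is exactly the recursion $X_{n+1}=q^{-1}(1+\lambda)X_n-\lambda q^{-n-1}(1+q)Y_n$, $Y_{n+1}=\lambda q^{n-1}X_n+(q^{-2}-\lambda)Y_n$, which the stated closed forms satisfy. If you assumed the cancellation you anticipated, you would get $X_2=q^{-4}(q^2+q-1)^2$ and $Y_2=\lambda(q^2+2q-1)/q^2$. The stated values are $(q^4+q^3-1)/q^5$ and $(q^4-1)/q^4$, so the induction would break already at $n=2$.

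\textbf{Two smaller points.}

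First, the $n=1$ case of (\ref{x123x1}) needs more than (\ref{rem-ord-5}), (\ref{rem-ord-6}) and the $n=1$ cases of the other formulas. Expanding $x_2^2x_1$ creates $x_2a_+a_-$ and $a_+a_-x_2$. These require (\ref{rem-ord-1}), then (\ref{rem-ord-8}) together with (\ref{ort-5}) for the resulting $b_+a_-$, and finally the diamond relation (\ref{diam-1}).

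Second, the pair (\ref{x3x1}), (\ref{x123x1}) is triangular rather than genuinely coupled. Once the $n=1$ coefficients of (\ref{x123x1}) are known, $(x_2^2-x_1x_3)x_1^{n+1}$ reduces using only:
\begin{itemize}
\item its own $n=1$ case;
\item (\ref{rem-ord-3}), for $b_-x_1$;
\item (\ref{trip-ord-1}), (\ref{trip-ord-2});
\item (\ref{diam-1}).
\end{itemize}
So its two coefficients obey a closed recursion. The recursion for (\ref{x3x1}) in turn uses (\ref{x123x1}) only through those $n=1$ coefficients. Your coupled $2\times 2$ formulation would still work, but the triangular structure is what the paper exploits and what keeps the coefficient verification manageable.
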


\noindent {\bf Proof.}~
Relations (\ref{x2x1})-(\ref{x123x1}) are proved by induction on $n$.

The proof of relation (\ref{x2x1}) is a direct check, the induction base is eq.(\ref{rem-ord-5}). 

Let us rewrite relation (\ref{bpx1}) in a concise form
\be
\lb{bpx1-2}
b_+ x_1^n = v^n X_n\, x_1^n b_+ \,+\, v Y_n\, a_+ x_1^{n-1} x_2,
\ee
where $X_n$ and $Y_n$ are  polynomials in $q^{\pm 1}$. Obviously, $X_0=1$, $Y_0=0$. Relation (\ref{rem-ord-3}) states that $X_1={1+\lambda\over q}$, $Y_1= {\lambda\over q}$. This is the induction base for the proof of (\ref{bpx1}),

The induction step 
$n\rightarrow n+1$  consists of calculating $b_+ x_1^{n+1}= (b_+ x_1^n) x_1$. Applying successively the induction assumption (\ref{bpx1-2}), relations (\ref{rem-ord-3}) and (\ref{rem-ord-5}), and then using relations (\ref{trip-ord-1}), (\ref{ort-1}), (\ref{ord-5}), (\ref{ort-3}) and again (\ref{trip-ord-1}), we obtain the following recurrence relations for the coefficients $X_n$, $Y_n$:
$$
\left\{\!\!
\begin{array}{rcl}
\nn
X_{n+1} &=& q^{-1}(1+\lambda)\, X_n \,-\, \lambda\, q^{-n-1}(1+q)\, Y_n, 
\\[3pt]
\nn
Y_{n+1} &=& \lambda\, q^{n-1} X_n \, +\, (q^{-2}-\lambda)\, Y_n.
\end{array}
\right.
$$
It is now easy to verify that the coefficients in (\ref{bpx1}) satisfy these relations.

Now, let us rewrite relations (\ref{x3x1}), (\ref{x123x1}) in a concise form
\ba
\lb{x3x1-2}
x_3 x_1^n &=& x_1^n x_3\,+\,  A_n\, x_1^{n-1} (x_2^2-x_1 x_3) \,+\, v^{1-n} B_n\, a_+ x_1^{n-1} b_-,
\\[2pt]
\lb{x123x1-2}
(x_2^2-x_1 x_3)\, x_1^n &=& C_n\,  x_1^n (x_2^2-x_1 x_3)\, +\, v^{-n} D_n\, a_+ x_1^n b_-.
\ea
where $A_n$, $B_n$, $C_n$ and $D_n$ are  polynomials in $q^{\pm 1}$.  Obviously, $A_0=B_0=D_0=0$, $C_0=1$. Relation (\ref{rem-ord-6}) states that 
$$
A_1=\lambda(q-1), \quad B_1= -\lambda (q-3+q^{-1}).
$$ 

Calculation of the coefficients $C_1$, $D_1$ is a bit more laborious. To bring element $(x_2^2-x_1 x_3) x_1$ to the irreducible form one applies 
relations (\ref{rem-ord-6}) and two times (\ref{rem-ord-5}) and then uses relations (\ref{trip-ord-1}), (\ref{rem-ord-1}), (\ref{rem-ord-8}), (\ref{ort-5}) and finally (\ref{diam-1}). The calculation gives following expressions for the coefficients
$$
C_1=q^{-4}[1+q(1+q)(2-q)(1-q^2)],\quad D_1=q^{-4}{[1+q(1+q)(2-q)](2-q)(1-q^2)},
$$
which agrees with the general formula (\ref{x123x1}). Thus, we have verified the induction base for proving the relations  (\ref{x3x1}), (\ref{x123x1}).

In the proof of (\ref{x3x1}) the induction step 
$n\rightarrow n+1$  consists of calculating $x_3 x_1^{n+1}= (x_3 x_1^n) x_1$. Applying successively the induction assumption (\ref{x3x1-2}), relations (\ref{rem-ord-6}), (\ref{x123x1-2}) for $n=1$ (this relation was verified above) and (\ref{rem-ord-3}), and then using relations (\ref{trip-ord-1}) several times and (\ref{diam-1}) we obtain the following recurrence relations for the coefficients $A_n$, $B_n$:
$$
\left\{\!\!
\begin{array}{rcl}
	\nn
	A_{n+1} &=& A_1\, +\, C_1 A_n \,+\, \lambda\, q^{-n}(1+q)\, B_n, 
	\\[4pt]
	\nn
	B_{n+1} &=&  q^{n} B_1\,+\, q^{n-1}D_1 A_n \, +\, (q+{\lambda\over q}-\lambda^2 )\, B_n.
\end{array}
\right.
$$
Coefficients in formula (\ref{x3x1}) verify these relations.

The induction step $n\rightarrow n+1$ in the proof of relation (\ref{x123x1}) consists of the calculation of
$\left((x_1^2-x_1 x_3)x_1^{n} \right)x_1$. Here,  by applying the induction assumption (\ref{x123x1-2}) and relation (\ref{rem-ord-3}), and then using formulas (\ref{trip-ord-1}) and (\ref{diam-1}), we finally obtain the following recurrence relations for the coefficients $C_n$ and $D_n$:
$$
\left\{\!\!
\begin{array}{rcl}
	\nn
	C_{n+1} &=&  C_1 C_n \,+\, \lambda\, q^{-n-1}(1+q)\, D_n, 
	\\[4pt]
	\nn
	D_{n+1} &=&  q^{n} D_1  C_n \, +\, (q+{\lambda\over q}-\lambda^2 )\, D_n.
\end{array}
\right.
$$
Coefficients in formula (\ref{x123x1}) verify these relations.
\phantom{a}\hfill$\blacktriangleright$\medskip

Applying (anti)homomorphic maps (\ref{antihom}), (\ref{hom}) to relations (\ref{x2x1})-(\ref{x123x1}), one can obtain permutation relations of $x_1^n$ with $b_-$ and $a_\pm$, and of $x_3^n$ with $x_{1,2}$, $a_\pm$ and $b_\pm$. Below we present those that will be used in what follows.
\begin{cor}
\lb{cor-1}
Some of the permutation relations for $x_3^n$,  $n>0$, have the form:
\lb{x3^n}
\ba
\lb{x3x2}
x_3^n\, x_2  &=&  x_2 \, x_3^n\, +\,  {1-q^{2n}\over v\, q^{2n-1}}\, b_+\, b_-\,x_3^{n-1}\, , 
\\[6pt]
\lb{x3ap}
x_3^n\, a_+ &=&  {q^{2n}+q^{2n-1}-1\over v^n\, q^{3n-1}}\, a_+\, x_3^n\, +\,  {q^{2n}-1\over v\, q^{2n-1}}\, x_2\, x_3^{n-1} b_+\, .
\ea
\end{cor}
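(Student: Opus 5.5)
The plan is to obtain both formulas of Corollary \ref{cor-1} from Lemma \ref{x1^n} by transport of structure, using the (anti)homomorphisms (\ref{antihom}) and (\ref{hom}). The text before the corollary already indicates this route.

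\emph{First relation (\ref{x3x2}).} Apply the anti-automorphism (\ref{antihom}) to (\ref{x2x1}). It fixes $x_2$, sends $x_1\mapsto x_3$, and reverses the order of products. The left-hand side $x_2x_1^n$ becomes $x_3^n x_2$, and $x_1^n x_2$ becomes $x_2 x_3^n$. The term $a_+a_-x_1^{n-1}$ becomes $x_3^{n-1}\, b_+ b_-$, since $a_+\mapsto b_-$, $a_-\mapsto b_+$ and the order is reversed.

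One point needs checking: (\ref{antihom}) is an anti-automorphism of ${\cal F}(R)_{q,v}$ itself, with the same $q$ and $v$, so the coefficient is unchanged. This gives $x_3^n x_2 = x_2x_3^n + \frac{1-q^{2n}}{vq}\, x_3^{n-1}b_+b_-$. To reach the stated form I then move $b_+b_-$ to the left through $x_3^{n-1}$ using (\ref{trip-ord-1}), (\ref{trip-ord-2}). From $b_+x_3=qv\,x_3b_+$ and $b_-x_3=\frac{q}{v}x_3b_-$ we get $x_3\,b_+b_- = q^{-2}\,b_+b_-\,x_3$. Hence $x_3^{n-1}b_+b_- = q^{-2(n-1)}b_+b_-x_3^{n-1}$, which produces the factor $\frac{1}{vq^{2n-1}}$, as claimed.

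\emph{Second relation (\ref{x3ap}).} Apply (\ref{antihom}) to (\ref{bpx1}). Since $b_+\mapsto a_-$, this yields $x_3^n a_- = \frac{v^n(\dots)}{q^{3n-1}}a_-x_3^n + \frac{v(q^{2n}-1)}{q^{2n}}x_2x_3^{n-1}b_-$, which is a relation for $a_-$ rather than $a_+$. Next apply the homomorphism (\ref{hom}) to pass to ${\cal F}(R)_{q,v^{-1}}$. It sends $a_-\mapsto a_+$ and $b_-\mapsto q\,b_+$, fixes the $x_i$, and does not reverse order. Reading the resulting identity in the algebra with parameter $v^{-1}$, and then renaming $v^{-1}\to v$, gives
\[
x_3^na_+ = \frac{q^{2n}+q^{2n-1}-1}{v^nq^{3n-1}}\,a_+x_3^n + \frac{q(q^{2n}-1)}{v\,q^{2n}}\,x_2x_3^{n-1}b_+,
\]
which is exactly (\ref{x3ap}).

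The main obstacle is bookkeeping: tracking how $v\leftrightarrow v^{-1}$ and the factor $q^{-1}$ in $b_+\leftrightarrow q^{-1}b_-$ enter the coefficients. As a safeguard I would verify the case $n=1$ directly against (\ref{rem-ord-5}) and (\ref{rem-ord-4}). For $n=1$, (\ref{x3ap}) gives $\frac{q^2+q-1}{vq^2}=\frac{1+\lambda}{qv}$ and $\frac{\lambda}{v}$, which match (\ref{rem-ord-4}). If the transport argument were in doubt, an alternative is a direct induction on $n$: write $x_3^{n+1}a_+=x_3(x_3^na_+)$, then use (\ref{rem-ord-4}), (\ref{rem-ord-2}), (\ref{ort-2}) and (\ref{ord-3}), mirroring the recurrence given for $X_n,Y_n$ in the proof of Lemma \ref{x1^n}.
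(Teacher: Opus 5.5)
Your proposal is correct and is the paper's own route: the paper obtains Corollary~\ref{cor-1} by applying the anti-automorphism (\ref{antihom}) and the homomorphism (\ref{hom}) to Lemma~\ref{x1^n}, and your bookkeeping reproduces both stated coefficients (the factor $q^{-2(n-1)}$ from $x_3\,b_+b_-=q^{-2}\,b_+b_-\,x_3$, and the substitutions $b_-\mapsto q\,b_+$, $v\to v^{-1}$). The relation list in your optional fallback induction is off: that computation needs (\ref{rem-ord-5}) for $x_3x_2$ and (\ref{ort-4}) to remove $c_+b_-$, rather than (\ref{rem-ord-2}) and (\ref{ord-3}), but this does not affect your main argument.
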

\medskip

The permutation relations of the generators with  $x_2^n$ are more complicated. We will derive implicit expressions for the numerical coefficients in these relations, fixing their dependence  only on the powers of $v$ and $\lambda=q-q^{-1}$. 
The explicit form of the coefficients as polynomials in $q^{\pm 1}$, $v^{\pm 1}$ is not required in our further considerations.
\begin{lem} Permutation relations for $x_2^n$, $n>1$, have the form:
\lb{x2^n}
\ba
\lb{bpx2}
b_+\, x_2^n &=&  v^{ n} \sum_{i\geq 0}\lambda^i Q_{n,i}\, x_1^i\, x_2^{n-2i}\, x_3^i\, b_+\,
 +\, v \sum_{i\geq 0}  \lambda^{i+1} T_{n,i}\, a_+\, x_1^{i} x_2^{n-2i-1} x_3^{i+1}\, ,
\\[6pt]
\lb{bmx2}
b_-\, x_2^n &=&  v^{ -n} \sum_{i\geq 0}\lambda^i Q_{n,i}\, x_1^i\, x_2^{n-2i}\, x_3^i\, b_-\,
+\, v^{-1} \sum_{i\geq 0}  q\, \lambda^{i+1} T_{n,i}\, a_-\, x_1^{i} x_2^{n-2i-1} x_3^{i+1}\, ,
\\[6pt]
\lb{x2ap}
 x_2^n\, a_+ &=& v^{- n} \sum_{i\geq 0}\lambda^i Q_{n,i}\, a_+\, x_1^i\, x_2^{n-2i}\, x_3^i\,
 +\, v^{- 1}\sum_{i\geq 0}  q\, \lambda^{i+1} T_{n,i} \,  x_1^{i+1} x_2^{n-2i-1} x_3^{i}\, b_+\, ,
\\[6pt]
\lb{x2am}
x_2^n\, a_- &=& v^{ n} \sum_{i\geq 0}\lambda^i Q_{n,i}\, a_-\, x_1^i\, x_2^{n-2i}\, x_3^i\,
+\, v\,\sum_{i\geq 0}   \lambda^{i+1} T_{n,i} \,  x_1^{i+1} x_2^{n-2i-1} x_3^{i}\, b_-\, ,
\\[6pt]
\nn
x_3\, x_2^n &=& \sum_{i\geq 0} \lambda^i  S_{n,i}\, x_1^i\, x_2^{n-2i}\, x_3^{i+1}\,
+\, v^{1-n} \sum_{i\geq 0}  \lambda^{i+1} U_{n,i}\, a_+\, x_1^{i}\, x_2^{n-2i-2}\,  x_3^{i+1}\, b_-
\\
\lb{x3x2n}
&&  \hspace{8mm}+\, 
\varepsilon_{n}\, v^{-1} \,\lambda^{n+1\over 2}\, W_n\, x_1^{n-3\over 2}\,a_+\, a_-\, \,x_3^{ {n+1\over 2} }\, ,
\\[8pt]
\nn
x_2^n\, x_1 &=& \sum_{i\geq 0} \lambda^i  S_{n,i}\, x_1^{i+1}\, x_2^{n-2i}\, x_3^{i}\,
+\, v^{1-n} \sum_{i\geq 0}  \lambda^{i+1} U_{n,i}\, a_+\, x_1^{i+1}\, x_2^{n-2i-2}\,  x_3^{i}\, b_-
\\
\lb{x2nx1}
&&  \hspace{8mm}+\, 
 \varepsilon_{n}\, v^{-1}\,\lambda^{n+1\over 2}\, W_n\, x_1^{n-1\over 2}\,a_+\, a_-\, \,x_3^{ {n-1\over 2} }\, .
\ea
Here $Q_{n,i}$, $T_{n,i}$, $S_{n,i}$, $U_{n,i}$ and $W_n$ are some polynomials in $q^{\pm 1}$, $\varepsilon_n$ equals $0 / 1$ if $n$ is even$/$odd.

For the lowest coefficients in the expansions  of the right hand sides of (\ref{bpx2})--(\ref{x3x2}) in powers of $\lambda$ the explicit expressions are
\be
Q_{n,0} = (1-\lambda)^n, 
\qquad S_{n,0}=(q^{-2}-\lambda)^n\,\, \forall\, n>1. 
\ee
\end{lem}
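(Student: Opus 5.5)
The proof goes by induction on $n$, in the same way as Lemma \ref{x1^n}. We keep track only of which irreducible monomials can appear, how the powers of $v$ and $\lambda$ enter, and the leading coefficients $Q_{n,0}$, $S_{n,0}$.

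\emph{Reduction to three relations.} We first use symmetry. The anti-automorphism (\ref{antihom}) maps $b_+x_2^n$ to $x_2^n a_-$, and the transposition homomorphism (\ref{hom}), combined with $v\to v^{-1}$, maps $b_+x_2^n$ to $q^{-1}b_-x_2^n$. Applied to (\ref{bpx2}), these give (\ref{bmx2}) and (\ref{x2am}); composing them gives (\ref{x2ap}). The factors $q$ in front of $T_{n,i}$ come from the rescalings $b_+\leftrightarrow q^{-1}b_-$, and the rearrangements inside the triples use (\ref{trip-ord-1})--(\ref{trip-ord-3}). Likewise (\ref{antihom}) maps $x_3x_2^n$ to $x_2^nx_1$. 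So (\ref{x2nx1}) follows from (\ref{x3x2n}), once we check that the set of monomials on the right-hand side is mapped into itself, using (\ref{diam-5}) and Lemma \ref{lem1}. It therefore suffices to prove (\ref{bpx2}) and (\ref{x3x2n}).

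\emph{Induction for $b_+x_2^n$.} The base case $n=1$ is (\ref{rem-ord-2}): $Q_{1,0}=1-\lambda$ and $T_{1,0}=-q^{-1}$. For the step, write $b_+x_2^{n+1}=(b_+x_2^n)x_2$ and insert the induction hypothesis. In the first sum we move $b_+$ past $x_2$ by (\ref{rem-ord-2}). This produces $(1-\lambda)v\,x_1^ix_2^{n-2i+1}x_3^ib_+$ and a term $\lambda\,x_1^ix_2^{n-2i}x_3^ia_+x_3$. In the latter we bring $a_+$ to the front through $x_3^i$ by (\ref{x3ap}), through $x_2^{n-2i}$ by the inductive (\ref{x2ap}), and through $x_1^i$ by (\ref{trip-ord-1}). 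Each such exchange either keeps the monomial type or trades $a_+\cdots x_3$ for $x_1\cdots b_+$, gaining one factor $\lambda$ and the pair $x_1,x_3$. In the second sum the factor $x_2$ must pass $x_3^{i+1}$ to the left, which we do using (\ref{x3x2}). The extra term $b_+b_-$ produced there is then rewritten via (\ref{diam-4}) and the first relation of (\ref{diam-0gen}), so that it lands again in the two allowed families.

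The power of $v$ is fixed by the $\Bbb Z^2$-grading (\ref{deg2}) together with the $v$-weights of the elementary exchanges. To see this, note that each exchange multiplies by a fixed power of $v$, and the pair $a_+\,(\cdots)\,x_3$ versus $x_1\,(\cdots)\,b_+$ differs by exactly $v^{n-1}$. The $\lambda$-order $i$ equals the number of $(x_1,x_3)$ pairs created, since each creation costs one $\lambda$. For the leading coefficient, reducing modulo $\lambda$ leaves only the diagonal exchange $b_+x_2\to v(1-\lambda)x_2b_+$. This gives $Q_{n,0}=(1-\lambda)^n$, because every other contribution to the $i=0$ term carries an explicit $\lambda$ times something that vanishes at that order.

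\emph{Induction for $x_3x_2^n$.} The base case is the first relation of (\ref{rem-ord-5}), read as $x_3x_2=x_2x_3-\frac{\lambda}{v}b_+b_-$. Here the term $b_+b_-$ must be reduced by (\ref{diam-4}), which gives $S_{1,0}$ only after using $1-\lambda$ and (\ref{rem-ord-6}). We will have to check that the $\lambda^0$ coefficient of $x_2x_3$ collects to $q^{-2}-\lambda$; I expect this to come out when the $x_1x_3$ contributions from (\ref{diam-4}) are re-expressed through (\ref{rem-ord-6}). For the step we compute $(x_3x_2^n)x_2$. Terms of the form $x_1^ix_2^{m}x_3^{i+1}x_2$ are handled by (\ref{x3x2}) and then (\ref{diam-4}). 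Terms $a_+(\cdots)b_-x_2$ are handled by (\ref{rem-ord-2}), (\ref{x3ap}), (\ref{x2am}), (\ref{diam-1}) and (\ref{list5}). The odd-$n$ term $a_+a_-$ arises exactly when the $x_2$-exponent of some monomial reaches $0$, namely $n-2i-1=0$; at that point (\ref{diam-1}) and (\ref{diam-5}) produce $x_1^{(n-3)/2}a_+a_-x_3^{(n+1)/2}$.

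\emph{Main obstacle.} The hardest part is this last bookkeeping: showing that no monomials outside the displayed families survive, in particular that the $a_+a_-$ terms appear only for odd $n$ and with the stated exponents. I would control this using the bigrading and the total $x_2$-degree parity. A secondary obstacle is confirming that the $v$-exponents in all the cross terms agree, which I would verify from the $\rho$-commutation weights.
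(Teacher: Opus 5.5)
Your skeleton matches the paper's: reduce to (\ref{bpx2}) and (\ref{x3x2n}) via (\ref{antihom}), (\ref{hom}), then induct by computing $b_+x_2^{n+1}=(b_+x_2^n)x_2$ and $x_3x_2^{n+1}=(x_3x_2^n)x_2$. But the inductive step as you describe it does not close.

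The missing ingredient is a reduced form of $x_2^m b_+b_-$ for arbitrary $m$, and of its mirror image $a_+a_-x_2^m$. This is the paper's Corollary~\ref{cor-2}, formula (\ref{x2bpbm}).
\begin{itemize}
\item In your second sum, (\ref{x3x2}) produces $a_+x_1^ix_2^{n-2i-1}b_+b_-x_3^i$, and (\ref{diam-4}) removes only one $x_2$. Writing $x_2^mb_+b_-=x_2^{m-1}(x_2b_+b_-)$ leaves $x_2^{m-1}a_+$ and $x_2^{m-1}x_1$. These need (\ref{x2ap}) and (\ref{x2nx1}) in lower degree, and the output again contains $x_2^{m'}b_+b_-$ with $m'<m$, so an inner induction is required.
\item Since (\ref{x2nx1}) is the image of (\ref{x3x2n}), the two inductions cannot be run separately as you set them up; they must be carried out simultaneously.
\item Once the prefix $a_+x_1^i$ is restored you get $a_+x_1^ia_+\cdots b_-$ and, for odd $n$, $a_+x_1^{(n-3)/2}a_+a_-x_3^{(n+1)/2}$. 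These return to the two admissible families only through the orthogonality relations (\ref{ort-1}), (\ref{ort-3}), (\ref{ort-4}): first $a_+^2\sim x_1c_+$, then $c_+a_-$ and $c_+b_-$ are rewritten. The relation (\ref{diam-0gen}) does not do this.
\item Your first sum has a similar hole. (\ref{rem-ord-2}) yields $x_1^ix_2^{n-2i}x_3^i\,x_2\,b_+$, not $x_1^ix_2^{n-2i+1}x_3^ib_+$. Moving $x_2$ through $x_3^i$ by (\ref{x3x2}) creates $x_1^ix_2^{n-2i}x_3^{i-1}b_+b_-b_+$, which needs (\ref{ort-2}) and (\ref{ort-4}): $b_+^2\sim c_+x_3$, then $c_+b_-\mapsto x_2b_+,\ a_+x_3$.
\item The same applies to your sketch for $x_3x_2^n$, where (\ref{diam-4}) and (\ref{diam-1}) each handle only a single power of $x_2$.
\end{itemize}

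Your base case and leading-coefficient argument for $x_3x_2^n$ are wrong. In $x_3x_2=x_2x_3-\frac{\lambda}{v}b_+b_-$ the monomial $b_+b_-$ is irreducible (it is (\ref{list6b}) with $a=0$). Relation (\ref{diam-4}) concerns the cubic $x_2b_+b_-$, so nothing can be rewritten, and $S_{1,0}=1\neq q^{-2}-\lambda$. This is why the lemma is stated for $n>1$. The induction for (\ref{x3x2n}) must be anchored at $n=2$, where a direct computation gives $(q^{-2}-\lambda)^2$ as the coefficient of $x_2^2x_3$. Alternatively, the $n=1$ relation can be used with $b_+b_-$ treated as a formal $W$-type term.

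Your ``reduce modulo $\lambda$'' reasoning also does not determine $Q_{n,0}$ or $S_{n,0}$. $\lambda=q-q^{-1}$ is not an independent variable, these are exact coefficients of the $i=0$ monomials, and terms carrying an explicit $\lambda$ can feed into them. That is exactly what happens for $S$:
\begin{itemize}
\item The leading term gives $S_{n,0}x_2^nx_3x_2=S_{n,0}x_2^{n+1}x_3-\frac{\lambda}{v}S_{n,0}x_2^nb_+b_-$.
\item The leading term $vq^{-1}(1+q)x_2^{n+1}x_3$ of (\ref{x2bpbm}) then produces the factor $1-\lambda\frac{1+q}{q}=q^{-2}-\lambda$. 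It does not come from (\ref{rem-ord-6}), and your heuristic would have predicted $S_{n,0}=1$.
\end{itemize}
For $Q_{n,0}=(1-\lambda)^n$ you need a bookkeeping statement instead. The only contribution to the monomial $x_2^{n+1}b_+$ is $v^{n+1}(1-\lambda)Q_{n,0}\,x_2^{n+1}b_+$, coming from (\ref{rem-ord-2}) applied to the leading term. Every other contribution either keeps a leading $a_+$ or acquires a factor $x_1$ (for example via $a_+^2\sim x_1c_+$).

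Finally, the bigrading together with the degree does fix which irreducible monomials can appear, including the parity condition for the $a_+a_-$ term. But it only does so once you know that everything reduces to irreducible monomials, which is exactly what the missing reductions supply.
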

\begin{rem} {\rm
The calculation of the numerical coefficients in formulas (\ref{bpx2})--(\ref{x3x2n}) for $n\leq 6$ shows that $Q_{n,i}$ and $T_{n,i}$ are divisible by $\lambda^i$, and $S_{n,i}$  is divisible by $(1+q)$. We do not verify these fine estimates here.
}
\phantom{a}\hfill$\blacktriangleright$

\end{rem}
 Before proceeding to the proof of the lemma \ref{x2^n} we will deduce its corollary.
\begin{cor}
\lb{cor-2}
Assuming validity of the relations  (\ref{x2ap}) for $x_2^m a_+$  and (\ref{x2nx1}) for $x_2^m x_1$   for all $ m=1,\dots, n-1$,
the reduced expression for the monomials $x_2^n b_+ b_-$, $n>0$, is
\ba
\nn
x_2^n\, b_+\, b_- &=& v\, q^{-1}(1+q)\, x_2^{n+1}\, x_3\, +\, v \sum_{i\geq 0} \lambda^i M_{n,i}\,
x_1^{i+1}\, x_2^{n-2i-1}\, x_3^{i+2}
\\[2pt]
\lb{x2bpbm}
&& +\, v^{1-n}\sum_{i\geq 0} \lambda^i J_{n,i}\, a_+\, x_1^i\, x_2^{n-2i-1}\, x_3^{i+1}\, b_-\, +
\varepsilon_{n-1}\lambda^{n\over 2} L_n\, x_1^{{n\over 2}-1} a_+\, a_-\, x_3^{{n\over 2}+1}\, .
\ea
Here $M_{n,i}$, $J_{n,i}$ and $L_n$ are some polynomials in $q^{\pm 1}$. They can be expressed in terms of
$Q_{m,i}$, $T_{m,i}$, $S_{m,i}$, $U_{m,i}$, $W_m$, $m=1,2,\dots , n-1$. Explicit expression for $J_{n,0}$ is
\be
\lb{J0}
J_{n,0} = (1-\lambda)\, Q_{n-1,0} = (1-\lambda)^n .
\ee
\end{cor}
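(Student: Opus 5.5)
We argue by induction on $n$, in the same spirit as the proofs of Lemmas \ref{x1^n} and \ref{x2^n}: we move $x_2^n$ across the pair $b_+ b_-$ one generator at a time.

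For the base case $n=1$ we take the diamond relation (\ref{diam-4}). It gives
$x_2 b_+ b_- = (1-\lambda)\, a_+ x_3 b_- + v q^{-1}(1+q)(x_2^2 x_3 - x_1 x_3^2)$,
and the term $x_1x_3^2$ is already ordered. This is exactly (\ref{x2bpbm}) with $M_{1,0}=-q^{-1}(1+q)$, $J_{1,0}=1-\lambda$ and no $L$-term, since $\varepsilon_0=0$. So the base case and (\ref{J0}) for $n=1$ hold.

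For the step we write $x_2^{n}b_+b_- = x_2^{n-1}\,(x_2 b_+ b_-)$ and substitute (\ref{diam-4}). Three kinds of terms then need reducing.

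\textbf{(i)} The term $v q^{-1}(1+q)\, x_2^{n+1} x_3$ is already irreducible. It is the leading term of (\ref{x2bpbm}).

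\textbf{(ii)} The term $-vq^{-1}(1+q)\,x_2^{n-1}x_1x_3^2$ is reduced with (\ref{x2nx1}) for $m=n-1$. This produces monomials $x_1^{i+1}x_2^{n-2i-1}x_3^{i+2}$, which feed $M_{n,i}$, and terms $a_+x_1^{i+1}x_2^{n-2i-3}x_3^{i}b_-\,x_3^2$. To bring the latter into the form $a_+\ldots x_3^{i+2}b_-$ we move $x_3^2$ past $b_-$ with the triple relation (\ref{trip-ord-2}), which only rescales by powers of $q/v$. When $n-1$ is odd there is also a term $a_+a_-x_1^{(n-2)/2}x_3^{(n-2)/2+2}$, and this is the source of the $L_n$-term; its parity indeed matches $\varepsilon_{n-1}$.

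\textbf{(iii)} The term $(1-\lambda)\,x_2^{n-1}a_+x_3b_-$ is reduced with (\ref{x2ap}) for $m=n-1$. The leading piece $(1-\lambda)v^{1-n}Q_{n-1,0}\,a_+x_2^{n-1}x_3b_-$ is already of the $J_{n,0}$ form. This gives $J_{n,0}=(1-\lambda)Q_{n-1,0}=(1-\lambda)^n$ by $Q_{m,0}=(1-\lambda)^m$, which is (\ref{J0}).

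The remaining pieces of (iii) fall into two groups.

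First, the $Q_{n-1,i}$ terms with $i\ge1$ carry the extra factor $\lambda^i$ and have the form $a_+x_1^ix_2^{n-1-2i}x_3^{i+1}b_-$, so they contribute to $J_{n,i}$.

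Second, the $T_{n-1,i}$ terms have the form $x_1^{i+1}x_2^{n-2i-2}x_3^i\,b_+x_3b_-$. We move $b_+$ past $x_3$ via (\ref{trip-ord-1}) to get $x_1^{i+1}x_2^{n-2i-2}x_3^{i+1}b_+b_-$. The factor $x_3^{i+1}b_+b_-$ is handled by (\ref{x3x2}) read backwards, or by commuting $b_+b_-$ across $x_2^{n-2i-2}$. Here we use the induction hypothesis (\ref{x2bpbm}) for the lower power $n-2i-2<n$, after first pulling $b_+b_-$ next to the $x_2$'s; $b_\pm$ commute with $x_3$ up to scalars by (\ref{trip-ord-1}), (\ref{trip-ord-2}). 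Each application yields terms of the four shapes in (\ref{x2bpbm}), multiplied on the left by $x_1^{i+1}$ and on the right by powers of $x_3$. They stay in the same shapes because $x_1$ on the left of $a_+$ reorders by (\ref{trip-ord-1}), and $x_1^{i+1}a_+a_-$ is consistent with the $L$-term.

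Bookkeeping of the weights comes next. Each occurrence of $\lambda$ is tracked by the grading (\ref{deg2}) and by counting how many pairs $x_2^2\to x_1x_3$ were exchanged. This confirms that the term with $x_2^{n-2i-1}$ comes with $\lambda^i$ and with the stated power of $v$. The power of $v$ is forced: $v^{1-n}$ for $a_+\cdots b_-$ and $v$ for the $x$-monomials. This follows because the scaling of $v$ is compatible with (\ref{hom}), and one can check it on the leading terms.

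The main obstacle is step (iii), the $T$-terms. They reintroduce the product $b_+b_-$, so we must make sure the recursion only calls (\ref{x2bpbm}) for strictly smaller powers of $x_2$, and that no irreducible monomial outside the list (\ref{list1})--(\ref{list6b}) appears. In particular, terms like $x_1^{a}x_3^{b}b_+b_-$ must be rewritten via the diamond relation (\ref{diam-5}) into $a_+a_-$ form, which is how the $L_n$ coefficient gets its precise placement.
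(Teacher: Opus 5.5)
Your proposal is correct and follows the paper's own proof essentially step for step: write $x_2^n b_+b_- = x_2^{n-1}(x_2b_+b_-)$ and apply (\ref{diam-4}); reduce the two remaining pieces with (\ref{x2ap}) and (\ref{x2nx1}) at $m=n-1$; move $b_\pm$ through $x_3$ by the triple relations; then close the recursion by applying (\ref{x2bpbm}) to $x_2^{n-2(i+1)}b_+b_-$, with (\ref{diam-5}) covering the case $n-2(i+1)=0$, which feeds $L_n$. The only soft spot is your appeal to (\ref{hom}) for the powers of $v$, which does not by itself force them; instead, track the scalars from (\ref{trip-ord-1}), (\ref{trip-ord-2}) directly (e.g.\ $x_1^{i+1}a_+$ and $b_-x_3^{i+1}$ each contribute $v^{-(i+1)}$, turning the inductive $v^{1-(n-2i-2)}$ into $v^{1-n}$), which is what the paper's displayed intermediate formula records.
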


\noindent {\bf Proof.}~ Let us calculate $x_2^n b_+ b_-$:
$$
x_2^n\, b_+\, b_-\, =\, x_2^{n-1} (x_2 b_+ b_-)\, =\, v\, {\textstyle { 1+q\over q}}\, x_2^{n+1}\, x_3\, +\,
 (1-\lambda)\, (x_2^{n-1} a_+)\, x_3 b_-\, -\, v\, {\textstyle { 1+q\over q}}\, (x_2^{n-1} x_1)\, x_3^2
$$
Here we used the diamond relation (\ref{diam-4}) at the beginning of the transformation. 
The last two terms in the right hand side of this calculation can be transformed  further with the use of
relations (\ref{x2ap}) and (\ref{x2nx1}). Using eqs.(\ref{trip-ord-1}), (\ref{trip-ord-2}), (\ref{diam-5}) and collecting similar terms
we finally obtain
\ba
\nn
x_2^n\, b_+\, b_- &=&  v\, {\textstyle { 1+q\over q}}\, x_2^{n+1}\, x_3\, +\, (1-\lambda) \sum_{i \geq 0}
\lambda^{i+1} q^{-2i} T_{n-1,i}\, x_1^{i+1}\, \underline{ x_2^{n-2(i+1)} b_+ b_-}\, x_3^{i+1}
\\[2pt]
\nn
&& -\, v\, {\textstyle { 1+q\over q}}\, \sum_{i\geq 0}\lambda^i S_{n-1,i}\, x_1^{i+1} x_2^{n-2i-1}\, x_3^{i+2}
\\[2pt]
\nn
&& 
+\, v^{1-n}\sum_{i\geq 0} \lambda^i \Bigl((1-\lambda) Q_{n-1,i}-q(1+q) U_{n-1,i-1}\Bigr)\, a_+\, x_1^i x_2^{n-2i-1}\, x_3^{i+1}\, b_-
\\[2pt]
\nn
&& 
-\, \varepsilon_{n-1}\, {\textstyle { 1+q\over q}}\,\lambda^{n\over 2}\, W_{n-1}\, x_1^{{n\over 2}}\, b_+\, b_-\, x_3^{{n\over 2}}.
\ea
In this expression the only reducible term is the rightmost one in the first line. This term contains the underlined combination $x_2^{n-2(i+1)} b_+ b_-$, which is similar to  $x_2^{n} b_+ b_-$, but has a lower exponent of the generator $x_2$: $n-2(i+1) <n$.  Now, by applying induction on $n$ we can verify the relation (\ref{x2bpbm}) and the statements about the form of the coefficients including (\ref{J0}). The base for induction is the diamond relation (\ref{diam-4}).
\phantom{a}\hfill$\blacktriangleright$\medskip

\noindent {\bf Proof of Lemma \ref{x2^n}.}~ Among six relations (\ref{bpx2})--(\ref{x2nx1}), only two, say (\ref{bpx2}) and (\ref{x3x2n}), are independent. The other four follow from this pair by applying  (anti)homomorphisms
(\ref{antihom}) and (\ref{hom}). 

\vspace{2pt}
We will start by considering the relation (\ref{bpx2}). The proof goes by induction on $n$, the base for induction is the relation (\ref{rem-ord-2}). The induction step $n\rightarrow n+1$ consists of the following calculation 
\be
\lb{step-1}
b_+\, x_2^{n+1}\, =\, (b_+\, x_2^{n}) x_2\, =\, {l.c.} \bigl\{ v^n\lambda^i\, x_1^i x_2^{n-2i} x_3^i\, b_+ x_2\, ,\,
v\, \lambda^{i+1}\, a_+ x_1^i x_2^{n-2i-1} x_3^{i+1} x_2 \bigr\}_{i\geq 0}.
\ee
Here we apply the induction assumption and use an abbreviated notation: ${l.c.}\bigl\{ \dots \bigr\}$, which means a linear combination of the terms in braces, the coefficients of which are polynomials in $q^{\pm 1}$.
\smallskip

Consider transformation of the typical monomial from the first subset in the braces in (\ref{step-1}).
Applying relations (\ref{rem-ord-2}) and then, (\ref{x3x2}) and (\ref{x3ap}), we obtain
\ba
\nn
v^n \lambda^i\, x_1^i x_2^{n-2i} x_3^i\, b_+ x_2 &=&  {l.c.} \bigl\{ v^{n+1}\lambda^i\, x_1^i x_2^{n+1-2i} x_3^i\, b_+ \, , \, 
v^n \lambda^{i+1} \, x_1^i x_2^{n-2i} \underline{x_3^{i-1} b_+ b_- b_+} \, ,\,
\\[3pt]
\lb{step-2}
&& 
\hspace{7mm}
v^{n+1-i} \lambda^{i+1} \, x_1^i x_2^{n-2i}a_+ x_3^{i+1} 
\, , \, v^n \lambda^{i+2} x_1^i x_2^{n-2i+1} x_3^{i-1} b_+ x_3 \bigr\}\, .
\ea
Here, the first term in braces is irreducible. It corresponds  to the left term in the right hand side of the formula
(\ref{bpx2}) we are testing. Upon permuting $b_+$ and $x_3$, the last term becomes identical to the first one. 

Next, we transform the third term in the braces in (\ref{step-2}).
\ba
\nn
v^{n+1-i} \lambda^{i+1} \, x_1^i (x_2^{n-2i}a_+) x_3^{i+1} \, =\,  {l.c.} \bigl\{ v^{i+1}\lambda^{i+j+1}\, x_1^i (a_+ x_1^j x_2^{n-2(i+j)} x_3^j)x_3^{i+1}\, , \hspace{30mm}
\\[2pt]
\nn
v^{n-i} \lambda^{i+j+2} \, x_1^i (x_1^{j+1} x_2^{n-2(i+j)-1} x_3^j b_+) x_3^{i+1}\bigr\}_{j\geq 0}\hspace{7mm}
\\[4pt]
\lb{step-3}
=\, {l.c.} \bigl\{ v\,\lambda^{i+j+1}\, a_+ x_1^{i+j} x_2^{n-2(i+j)} x_3^{i+j+1}\, , \,
v^{n+1} \lambda^{i+j+2} \, x_1^{i+j+1} x_2^{n+1-2(i+j+1)} x_3^{i+j+1}b_+\bigr\}_{j\geq 0}\, .\hspace{5mm}
\ea
Here, we substituted formula (\ref{x2ap}) for $x_2^{n-2i} a_+$ (thus, applying the induction assumption),
and then, used permutation relations in the pairs $a_+$, $x_1$ and $b_+$, $x_3$ to order them. We see, that the second term in braces in (\ref{step-3}) again reproduces the first term in the r.h.s. of the formula (\ref{bpx2}), while the first term in (\ref{step-3}) corresponds to the second terms in the r.h.s. of  (\ref{bpx2}).
\smallskip

Finally, we shall bring the second term in the braces in (\ref{step-2}) to the irreducible form. To this end we transform its last underlined in (\ref{step-2}) part:
\ba
\lb{step-4}
x_3^{i-1} b_+ b_- b_+ \, \sim \, v^{-i-1}\, b_+^2 b_- x_3^{i-1}\, \sim \,  v^{-i}\, c_+ b_- x_3^i\, =\,
{l.c.}\bigl\{ v\, x_2 x_3^i b_+\, ,\, v^{1-i}\, a_+ x_3^{i+1}\bigr\}\, .
\ea
Here symbol "$\sim$" means an equality up to multiplication by a polynomial in $q^{\pm 1}$ factor. In the transformation (\ref{step-4}), we used permutations in the triple $x_3$, $b_\pm$ in the first step, the orthogonality relations (\ref{ort-2}) and (\ref{ort-4}) in the second and third steps, and completed the transformation, rearranging $x_3$ and $b_+$ again. Substituting the result of the transformation (\ref{step-4}) back into the second term in (\ref{step-2}) we find that this term is a linear combination of the terms like the first and the third ones in (\ref{step-2}).

Thus, we confirmed that the first subset in braces in (\ref{step-1}) reduces to  a linear combination of the irreducible terms which are present in the formula (\ref{bpx2}).
\smallskip

It remains  to transform monomials from the second subset in the braces in (\ref{step-1}). 
We apply relation (\ref{x3x2})
\ba
\lb{step-5}
v\, \lambda^{i+1} \, a_+ x_1^i x_2^{n-2i-1} (x_3^{i+1} x_2)\, =\,  {l.c.} \bigl\{ v\,\lambda^{i+1}\, a_+ x_1^i x_2^{n-2i}x_3^{i+1}\, ,\,
\lambda^{i+2} \, a_+ x_1^i  x_2^{n-2i-1} b_+ b_- x_3^{i}\bigr\} 
\ea

The first term in the r.h.s of (\ref{step-5}) is irreducible. It has a form of the second term in (\ref{bpx2}). 

The second term in the r.h.s. of (\ref{step-5}) 
is reducible. Applying relations (\ref{x2bpbm}) we continue its transformation
\ba
\nn
\lambda^{i+2} \, a_+ x_1^i (x_2^{n-2i-1} b_+ b_-) x_3^{i} \, =\,  {l.c.} \bigl\{ v\,\lambda^{i+2}\, a_+ x_1^i x_2^{n-2i} x_3^{i+1}\, , 
v\, \lambda^{i+j+2}\,a_+ x_1^{i+j+1} x_2^{n-2(i+j+1)} x_3^{i+j+2}\, ,
\\[4pt]
\lb{step-6}
v^{2(i+1)-n}\lambda^{i+j+2}\, a_+ x_1^{i} a_+ x_1^{j} x_2^{n-2(i+j+1)} x_3^{j+1} b_- x_3^i\, , \,
\varepsilon_n\, \lambda^{n+3\over 2} \,a_+\, x_1^{n-3\over 2} a_+ a_- x_3^{n+1\over 2}\bigr\}_{j\geq 0}\, .\hspace{3mm}
\ea
The first two terms in the r.h.s. of (\ref{step-6}) are irreducible and have a form of the second term in (\ref{bpx2}).
Continuing transformation of the last term in the r.h.s. of (\ref{step-6}) we obtain
\ba
\nn
\varepsilon_n \lambda^{n+3\over 2}\, a_+ x_1^{n-3\over 2} a_+ a_- x_3^{n+1\over 2}\, =\,
\varepsilon_n \lambda^{n+3\over 2}\, v^{n-3\over 2} x_1^{n-1\over 2} c_+ a_- x_3^{n+1\over 2}\hspace{15mm}
\\[3pt]
\lb{step-7}
=\, {l.c.} \bigl\{ \varepsilon_n v^{n+1}\lambda^{n+3\over 2}\, x_1^{n+1\over 2} x_3^{n+1\over 2} b_+ \, , \, \varepsilon_n
v\,  \lambda^{n+3\over 2} \, a_+ x_1^{n-1\over 2} x_2 x_3^{n+1\over 2} \bigr\}\, .
\ea
In the first step of this transformation we permuted $a_+$ with $x_1$ and applied orthogonality relation (\ref{ort-1}).
Then we permuted $c_+$ with $a_-$, used the orthogonality relation (\ref{ort-3}) and finally, permuted $b_+$ with $x_1$, and $x_1$ with $a_+$.
As we see, all the terms in (\ref{step-7}) are irreducible and they are present in the r.h.s. of (\ref{bpx2}).

It remains to consider the third term in the r.h.s. of (\ref{step-6}). The transformation goes as follows
\ba
\nn
v^{2(i+1)-n}\lambda^{i+j+2} \, a_+ x_1^i a_+ x_1^j x_2^{n-2(i+j+1)}  x_3^{j+1} b_- x_3^i \, =\,   v^{n-k-1}\lambda^{k+2}\, x_1^{k+1} x_2^{n-2(k+1)} c_+ b_- x_3^{k+1} |_{k=i+j}\, 
\\[4pt]
\lb{step-8}
=\, {l.c.}\bigl\{
v^{n+1}\lambda^{k+2}\, x_1^{k+1} x_2^{n-2(k+1)} x_3^{k+1} b_+\, , \,
v^{n+1-(k+1)}\, \lambda^{k+2} \, x_1^{k+1} x_2^{n-2(k+1)}a_+  x_3^{k+2}\bigr\}|_{k=i+j}\, .\hspace{5mm}
\ea
Here in the first line we moved $b_-$ to the left of $x_3$ and $a_+$ to the right of $x_1$, applied orthogonality relation
(\ref{ort-1}) and moved $c_+$ to the right of $x_2$. In the second line we used orthogonality relation (\ref{ort-4}) and then, permuted $b_+$ and  $x_3$ in the first term. The first term in (\ref{step-8}) becomes irreducible. It has a form of the first term in the r.h.s. of (\ref{bpx2}). The second term we already transformed to an irreducible form in the calculation (\ref{step-3}). The induction step for the relation (\ref{bpx2}) is completed.
\smallskip

The proof of the induction step  $n\rightarrow n+1$ for the relation (\ref{x3x2n}) is similar. We will briefly comment on it.
The induction step $n\rightarrow n+1$ consists in the following calculation 
\ba
\nn
x_3\, x_2^{n+1}\, =\, (x_3\, x_2^{n}) x_2\, =\, {l.c.} \bigl\{ \lambda^i\, x_1^i x_2^{n-2i} x_3^{i+1}\, x_2\, ,\,
v^{i+2-n}\, \lambda^{i+1}\, a_+ x_1^i x_2^{n-2i-2} b_- x_3^{i+1} x_2 \, ,\,
\\[4pt]
\lb{step-21}
\varepsilon_n\, v^{-1} \lambda^{n+1\over 2}\, x_1^{n-3\over 2} a_+ a_- x_3^{n+1\over 2} x_2
\bigr\}_{i\geq 0}\, .\hspace{1mm}
\ea
Here we applied the relation (\ref{x3x2n}) (our induction assumption) and rearranged $b_-$ and $x_3$ in the second term on the right hand side.

The first term in the r.h.s. of (\ref{step-21}) can be transformed using the relations (\ref{x3x2}) and (\ref{x2bpbm}). 
Its irreducible expression contains all terms that are present in the r.h.s. of (\ref{x3x2n}). 

The last term in the r.h.s. of (\ref{step-21}) can be brought to an irreducible form with the help of orthogonality relations
(\ref{ort-1}), (\ref{ort-2}), (\ref{ort-5}) and the diamond relation (\ref{diam-1}). Its irreducible expression contains first and second terms from the r.h.s. of (\ref{x3x2n}). 

We will take a closer look at the most time-consuming calculation of the second term  in the r.h.s. of (\ref{step-21}). To transform this term we use relation (\ref{x3x2}) and then,  permute generators  $b_-$ and $x_2$ with the help of relation (\ref{rem-ord-2}). The result is
\ba
\nn
v^{i+2-n}\, \lambda^{i+1}\, a_+ x_1^i x_2^{n-2i-2} b_- x_3^{i+1} x_2 \, =
 {l.c.}\bigl\{
 v^{-n} \lambda^{i+1} a_+ x_1^i x_2^{n-2i-1} x_3^{i+1} b_-\, ,\hspace{15mm}
\\[4pt]
\lb{step-22}   
v^{i+1-n} \lambda^{i+2} a_+ x_1^i x_2^{n-2i-2} a_- x_3^{i+2}\, ,\,
v^{1-n} \lambda^{i+2} a_+ x_1^i x_2^{n-2i-2} x_3^i b_- b_+ b_-
\}\, .
\ea
The first term in the r.h.s of (\ref{step-22}) is irreducible. It has a form of the second term in (\ref{x3x2n}).

To bring the second term in the r.h.s of (\ref{step-22}) to the irreducible form we apply consecutively
the relation (\ref{x2am}) and the image of the relation (\ref{x2bpbm}) under the antihomomorphism (\ref{antihom}) which gives the reduced expressions for the monomials $a_+ a_- x_2^n$. At the final stage of the transformation we rearrange pairs of generators $x_1$, $a_+$ and $b_-$, $x_3$ and use the diamond relations (\ref{diam-5}). The resulting expression contains irreducible terms from the r.h.s. of (\ref{x3x2n}).

The last term in the r.h.s. of (\ref{step-22}) contains the monomial $x_3^i b_- b_+ b_-$. Applying the homomorphic map (\ref{hom}) to the relation (\ref{step-4}) we obtain the reduced expression for it
\ba
\lb{step-23}
x_3^{i} b_- b_+ b_- \, = \, 
{l.c.}\bigl\{ v^{-1} x_2 x_3^{i+1} b_-\, ,\, v^{i}\, a_- x_3^{i+2}\bigr\}\, .
\ea
Therefore for the last term in the r.h.s. of (\ref{step-22}) we have
\ba
\nn
v^{1-n} \lambda^{i+2} a_+ x_1^i x_2^{n-2i-2} x_3^i\, b_- b_+ b_-\, =
{l.c.}\bigl\{
v^{-n} \lambda^{i+2} a_+ x_1^i x_2^{n-2i-1} x_3^{i+1} b_-\, ,\hspace{15mm}
\\[4pt]
\lb{step-22}   
v^{i+1-n} \lambda^{i+2} a_+ x_1^i x_2^{n-2i-2} a_- x_3^{i+2}
\}\, .
\ea
Thus, we expressed the last term on the r.h.s. of the relation (\ref{step-22}) as a linear combination of the first two terms from there.  Hence, its reduced expression contains  terms from the r.h.s. of (\ref{x3x2n}) only.
The proof of the induction step for the relation (\ref{x3x2n}) is finished.
\phantom{a}\hfill$\blacktriangleright$\smallskip

\noindent {\bf Proof of Theorem \ref{theor1}.}~
Verifying the statement of the theorem is a straightforward although lengthy calculation.
It goes by induction on the degree of the monomials in the algebra generators. 
\smallskip

The base of the induction is the set of second-degree monomials. Indeed, the $9$ generators of the algebra ${\cal F}(R)_{q,v}$ form  $9^2 =81$ quadratic combinations, which are subject to $36$ ordering and $10$ orthogonality relations. 
All remaining $35 = 9^2- 36 -10$ independent quadratic monomials are present in the list (\ref{list1})--(\ref{list6b}).

The inductive step consists of checking that monomials of degree $n+1$ obtained by multiplying all irreducible monomials of degree $n$ by the algebra generators on the left
(or, equally, on the right) can be expressed as  linear combinations of the degree $n+1$ irreducible monomials from the list (\ref{list1})--(\ref{list6b}). 
This calculation goes with the use of the permutation relations for $x_i^n$, that we derived in the lemmas \ref{x1^n},
\ref{x2^n} and the corollaries \ref{cor-1}, \ref{cor-2}. We leave it to the interested reader.
\hfill$\blacksquare$\medskip

Let us place all irreducible cubic monomials on a square lattice: the coordinate of a monomial on the lattice coincides with its ${\Bbb Z}^2$-grading. We get the following figure:
$$
\begin{array}{lllllll}
\hspace{1mm}	{\color{blue}(0,6)\atop \bullet}\, \mathsmaller{x_1^3} & \hspace{-3mm}	
	{\color{blue}(1,5)\atop \bullet}\,\,\mathsmaller{ a_+ x_1^2} & 	\hspace{-4mm}
	{\color{blue}(2,4)\atop \bullet}\,\,\,\mathsmaller{x_1^2 c_+} & \hspace{-4mm}	
	{\color{blue}(3,3)\atop \bullet}\,\,\,\mathsmaller{a_+ x_1 c_+}& 	\hspace{-4mm}
	{\color{blue}(4,2)\atop \bullet}\,\,\,\mathsmaller{x_1 c_+^2} &  \hspace{-4mm}	\,
	{\color{blue}(5,1)\atop \bullet}\,\,\,\mathsmaller{a_+ c_+^2} &\hspace{-3mm}  	
	{\color{blue}(6,0)\atop \bullet}\,\,\,\mathsmaller{c_+^3}
	\\[14pt]
	{\color{blue}(-1,5)\atop \bullet}\,\mathsmaller{a_- x_1^2} &  \hspace{-3mm}
	{\color{blue}(0,4)\atop \bullet} \,\begin{array}{l}\mathsmaller{x_1^2 x_2}\\[-4pt] \mathsmaller{a_+a_-x_1} \end{array} & 	\hspace{-4mm}
	{\color{blue}(1,3)\atop \bullet} \,\begin{array}{l}\mathsmaller{a_+ x_1 x_2} \\[-4pt] \mathsmaller{x_1^2 b_+}\end{array} &  \hspace{-4mm}
	{\color{blue}(2,2)\atop \bullet} \,\,\begin{array}{l}\mathsmaller{a_+ x_1 b_+}\\[-4pt] \mathsmaller{x_1 c_+ x_2}\end{array}& \hspace{-4mm}
	{\color{blue}(3,1)\atop \bullet} \,\begin{array}{l}\mathsmaller{a_+ c_+ x_2} \\[-4pt] \mathsmaller{x_1 c_+ b_+}\end{array}& \hspace{-4mm} \,
	{\color{blue}(4,0)\atop \bullet} \,\,\begin{array}{l}\mathsmaller{a_+ c_+ b_+}\\[-4pt] \mathsmaller{c_+^2 x_2}\end{array} &  \hspace{-4mm}
	{\color{blue}(5,-1)\atop \bullet}\,\mathsmaller{c_+^2 b_+}
	\\[10pt]
	{\color{blue}(-2,4)\atop \bullet}\,\mathsmaller{x_1^2 c_-} &  \hspace{-4mm}
	{\color{blue}(-1,3)\atop \bullet}\begin{array}{l}\mathsmaller{a_- x_1 x_2}\\[-4pt] \mathsmaller{x_1^2 b_-} \end{array} &  \hspace{-4mm}
	{\color{blue}(0,2)\,\,\atop \bullet} \begin{array}{l}\mathsmaller{a_+ x_1 b_-} \\[-4pt] \mathsmaller{x_1^2 x_3} \\[-4pt] 
	\mathsmaller{x_1 x_2^2}\\[-4pt]
    \end{array} &  \hspace{-4mm}
	{\color{blue}(1,1)\atop \bullet} \,\,\begin{array}{l}\mathsmaller{a_+ x_1 x_3}\\[-4pt] \mathsmaller{a_+ x_2^2}\\[-4pt]
		\mathsmaller{x_1 x_2 b_+}\end{array}& \hspace{-4mm}
	{\color{blue}(2,0)\atop \bullet} \,\begin{array}{l}\mathsmaller{a_+ x_2 b_+} \\[-4pt] \mathsmaller{x_1 c_+ x_3}\\[-4pt] 
		\mathsmaller{c_+ x_2^2}\end{array}&  \hspace{-4mm}
	{\color{blue}(3,-1)\atop \bullet} \begin{array}{l}\mathsmaller{a_+ c_+ x_3}\\[-4pt] \mathsmaller{c_+ x_2 b_+}\end{array} &  \hspace{-4mm}
	{\color{blue}(4,-2)\atop \bullet}\,\mathsmaller{c_+^2 x_3}
	\\[4pt]
	{\color{blue}(-3,3)\atop \bullet}\,\mathsmaller{a_- x_1 c_-}\, &  \hspace{-4mm}
	{\color{blue}(-2,2)\atop \bullet} \begin{array}{l}\mathsmaller{a_- x_1 b_-}\\[-4pt] \mathsmaller{x_1 c_- x_2} \end{array} & \hspace{-5mm} 
	{\color{blue}(-1,1)\atop \bullet} \begin{array}{l}\mathsmaller{a_- x_1 x_3} \\[-4pt] \mathsmaller{a_- x_2^2 } \\[-4pt] 
	\mathsmaller{x_1 x_2 b_-} \\[-4pt]
    \end{array}&  \hspace{-4mm}
	{\color{blue}(0,0)\atop \bullet} \,\,\begin{array}{l}\mathsmaller{a_+ x_2 b_-}\\[-4pt] \mathsmaller{x_1 x_2 x_3}\\[-4pt]
	\mathsmaller{ x_2^3 }\\[-4pt]\mathsmaller{a_+ a_- x_3}\\[-4pt]
    \end{array}& \hspace{-5mm}
	{\color{blue}(1,-1)\atop \bullet} \begin{array}{l}\mathsmaller{a_+ x_2 x_3} \\[-4pt] \mathsmaller{x_1 b_+ x_3 }\\[-4pt] 
	\mathsmaller{x_2^2 b_+}\\[-4pt] 
    \end{array}&  \hspace{-4mm}
	{\color{blue}(2,-2)\atop \bullet} \begin{array}{l}\mathsmaller{a_+ b_+ x_3 }\\[-4pt] \mathsmaller{c_+ x_2 x_3}\end{array} &  \hspace{-4mm}
	{\color{blue}(3,-3)\atop \bullet}\,\mathsmaller{c_+ b_+ x_3}
	\\[17pt]
	{\color{blue}(-4,2)\atop \bullet}\,\mathsmaller{x_1 c_-^2} &  \hspace{-4mm}
	{\color{blue}(-3,1)\atop \bullet} \begin{array}{l}\mathsmaller{a_- c_- x_2}\\[-4pt] \mathsmaller{x_1 c_- b_-} \end{array} &  \hspace{-5mm}
	{\color{blue}(-2,0)\atop \bullet} \begin{array}{l}\mathsmaller{a_- x_2 b_-} \\[-4pt] \mathsmaller{x_1 c_- x_3} \\[-4pt] 
		\mathsmaller{c_- x_2^2}\end{array} &  \hspace{-6.5mm}
	{\color{blue}(-1,-1)\atop \bullet} \begin{array}{l}\mathsmaller{a_- x_2 x_3}\\[-4pt] \mathsmaller{ x_1 b_- x_3 }\\[-4pt]
		\mathsmaller{x_2^2 b_-}\end{array}& \hspace{-5mm}
	{\color{blue}(0,-2)\atop \bullet} \begin{array}{l}\mathsmaller{a_+ b_- x_3 } \\[-4pt] \mathsmaller{x_1 x_3^2}\\[-4pt] 
	\mathsmaller{x_2^2 x_3}\\[-4pt] 
    \end{array}&  \hspace{-4mm}
	{\color{blue}(1,-3)\atop \bullet} \begin{array}{l}\mathsmaller{x_2 b_+ x_3 }\\[-4pt] \mathsmaller{a_+ x_3^2}\end{array} &  \hspace{-4mm}
	{\color{blue}(2,-4)\atop \bullet}\,\mathsmaller{c_+ x_3^2}
	\\[14pt]
	{\color{blue}(-5,1)\atop \bullet}\,\mathsmaller{a_- c_-^2} &  \hspace{-4mm}
	{\color{blue}(-4,0)\atop \bullet} \begin{array}{l}\mathsmaller{a_- c_- b_-}\,\\[-4pt] \mathsmaller{c_-^2 x_2} \end{array} &  \hspace{-6mm}
	{\color{blue}(-3,-1)\atop \bullet} \!\begin{array}{l}\mathsmaller{a_- c_- x_3} \\[-4pt] \mathsmaller{c_- x_2 b_-} \end{array}\, &  \hspace{-6mm}
	{\color{blue}(-2,-2)\atop \bullet} \begin{array}{l}\mathsmaller{a_- b_- x_3 }\\[-4pt] \mathsmaller{ c_- x_2 x_3 }\end{array}\,& \hspace{-6mm}
	{\color{blue}(-1,-3)\atop \bullet} \!\!\begin{array}{l}\mathsmaller{x_2 b_- x_3 } \\[-4pt] \mathsmaller{a_- x_3^2}\end{array}& \hspace{-4mm} 
	{\color{blue}(0,-4)\atop \bullet} \begin{array}{l}\mathsmaller{x_2 x_3^2}\\[-4pt] \mathsmaller{  b_+ b_-x_3}\end{array} & \hspace{-4mm} 
	{\color{blue}(1,-5)\atop \bullet}\,\mathsmaller{b_+ x_3^2 }
	\\[20pt]
	{\color{blue}(-6,0)\atop \bullet}\, \mathsmaller{c_-^3} & \hspace{-5mm}
	{\color{blue}(-5,-1)\atop \bullet}\,\mathsmaller{ c_-^2 b_-} & \hspace{-6mm}
	{\color{blue}(-4,-2)\atop \bullet}\,\mathsmaller{c_-^2 x_3} & \hspace{-6mm}
	{\color{blue}(-3,-3)\atop \bullet}\,\mathsmaller{c_- b_- x_3}&\hspace{-6mm}
	{\color{blue}(-2,-4)\atop \bullet}\,\mathsmaller{c _- x_3^2} & \hspace{-5mm}
	{\color{blue}(-1,-5)\atop \bullet}\,\mathsmaller{b_- x_3^2 } & \hspace{-4mm} 
	{\color{blue}(0,-6)\atop \bullet}\,\mathsmaller{x_3^3}
\end{array}
$$
Here vertices of the lattice and their bi-gradings are colored blue. 
\smallskip

The figure above suggests a conjecture that when placed on the ${\Bbb Z}^2$-grading  lattice the set of the degree $n$ irreducible monomials looks like a picture of the nested squares. The number of monomials corresponding to a given lattice point is constant on the border points of each square. It equals  1 on the border of the largest square, and it grows by 1 when one moves from the outer square to its nearest inner square. Assuming this is true we can calculate the number $d_n$ of independent irreducible monomials of degree $n$ in ${\cal F}_{q,v}$:
\be
\lb{D}
d_n\, =\, (n+1)\,+\, 8\,{n(n+1)(n+2)\over 6}\, =\, {2n+3\choose 3}, \quad n\geq 2.
\ee
This formula does not work only for $n=1$,  in which case $d_1=9 = {5\choose 3}-1$. 
So the Hilbert–Poincaré series for the algebra ${\cal F}_{q,v}(R)$ is
\be
\lb{HP-F3}
{\rm P}(t)\, =\, 1-t+ \sum_{n\geq 1} {2n+3\choose 3} t^n\, =\,\frac{8 t}{(1-t)^4} + \frac{1}{(1-t)^2} - t\, .
\ee

Let us summarize the results obtained.
\begin{prop}
\lb{prop2}
Any polynomial of degree $n\geq 0$ in the algebra ${\cal F}(R)_{q,v}$ can be expressed in terms of the $d_n={2n+3\choose 3}$ irreducible ${\Bbb Z}^2$-graded monomials. Below are listed the monomials for which both components of the ${\Bbb Z}^2$-grading are non-negative.
\ba
\nn
\mbox{\rm grad}= (0,2p), \, \mbox{$\mathsmaller{p\geq 0}$}: 
&&\hspace{-4mm} 
x_1^{i+p} x_2^{n-2i-p} x_3^i, \qquad\qquad\,\quad 0\leq i\leq 
\lfloor \textstyle{n-p\over 2}\rfloor,
\\[1pt]
\lb{0,2p}
&& \hspace{-4mm}
a_+ x_1^{i+p} x_2^{n-2i-p-2} \, b_- x_3^i,\,\,\,\,\quad  0\leq i \leq
\lfloor \textstyle{n-p-2\over 2}\rfloor,
\\
\nn
&& \hspace{-4mm}
a_+ a_- x_1^{n+p-3\over 2} x_3^{n-p-1\over 2}, \qquad\,\quad \mbox{if $(n-p)$ is odd}\, ,
\\[8pt]
\nn
\mbox{\rm grad}= (2k,2p), \, \mbox{$\mathsmaller{k>0,\,p\geq 0}$}: 
&& \hspace{-4mm}
x_1^{i+p} c_+^k x_2^{n-2i-p-k} x_3^i, \qquad\qquad\,\,0\leq i\leq 
\lfloor \textstyle{n-p-k\over 2}\rfloor,
\\[1pt]
\lb{2k,2p}
&& \hspace{-4mm}
a_+ x_1^{i+p} c_+^{k-1}x_2^{n-2i-p-k-1} b_+ x_3^i,\,\, 0\leq i \leq
\lfloor \textstyle{n-p-k-1\over 2}\rfloor,
\\[8pt]
\nn
\mbox{\rm grad}= (2k+1,2p+1), \, \mbox{$\mathsmaller{k\geq 0,\,p\geq 0}$}: 
&& \hspace{-4mm}
a_+ x_1^{i+p} c_+^k x_2^{n-2i-p-k-1} x_3^i, \quad\,\, 0\leq i\leq 
\lfloor \textstyle{n-p-k-1\over 2}\rfloor,
\\[1pt]
\lb{2k+1,2p+1}
&& \hspace{-4mm}
x_1^{i+p+1} c_+^k x_2^{n-2i-p-k-2}\, b_+ x_3^i, \,\, 0\leq i\leq 
\lfloor \textstyle{n-p-k-2\over 2}\rfloor .
\ea
The remaining monomials contain negative components in their ${\Bbb Z}^2$-grading. They are obtained from the monomials (\ref{0,2p})--(\ref{2k+1,2p+1}) via the (anti-)homomorphisms (\ref{antihom}), (\ref{hom}).
\end{prop}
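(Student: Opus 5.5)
The plan has two parts. First, Theorem \ref{theor1} shows that every degree-$n$ monomial reduces to the irreducible ones (\ref{list1})--(\ref{list6b}), so it suffices to sort that list by ${\Bbb Z}^2$-grading and count. Second, the statement about $d_n={2n+3\choose 3}$ will be proved as a counting identity, in the spirit of formula (\ref{D}). Here "expressed in terms of" means spanning only; linear independence of these monomials is conjectural and is not claimed.

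For the listing, I would compute the grading of each family from (\ref{deg2}) and keep those with both components non-negative.

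Family 1), with $c_+$ only, has grading $(2b,2(a-d))$, which gives the first rows of (\ref{0,2p}) and (\ref{2k,2p}). Writing $a=i+p$ and $d=i$, the constraint $c=n-2i-p-k\ge0$ produces the bound on $i$.

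Family 2), $a_+x_1^a c_+^b x_2^c x_3^d$, has grading $(2b+1,2(a-d)+1)$. Family 3), $x_1^a c_+^b x_2^c x_3^d b_+$, has grading $(2b+1,2(a-d)-1)$. After reindexing $a\to i+p+1$, these give the two rows of (\ref{2k+1,2p+1}).

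Family 4) with $a_+\ldots b_+$ has grading $(2b+2,2(a-d))$, which is the second row of (\ref{2k,2p}) with $k=b+1$. Family 4) with $a_+\ldots b_-$ is family 5). Its grading is $(0,2(a-d))$, giving the second row of (\ref{0,2p}); here the $c_\pm$ are absent. The mixed products $a_\pm \ldots b_\mp$ containing $c$ are excluded, via the orthogonality relation (\ref{ort-5}) and Lemma \ref{lem1}.

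Families 6a), 6b) have grading $(0,2(a-b)\pm\ldots)$, which gives the third row of (\ref{0,2p}). Its parity condition $n-p$ odd comes from the degree count $n=a+b+3$ in 6a), together with 6b) at the extremes. In all cases I would check that, after reordering with (\ref{trip-ord-1})--(\ref{trip-ord-3}) and (\ref{ord-1})--(\ref{ord-6}), each monomial appears exactly once.

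For the remaining gradings, I would apply the anti-automorphism (\ref{antihom}), which sends grading $(s,t)\mapsto(s,-t)$ up to sign conventions, and the homomorphism (\ref{hom}), which sends $(s,t)\mapsto(-s,t)$. Since the set (\ref{list1})--(\ref{list6b}) is $\sim$-invariant under both maps, as noted after Lemma \ref{lem1}, the quadrants are covered.

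For the count, I would define $N(s,t)$ as the number of listed monomials in grading $(s,t)$. The main step is to show that $N(s,t)$ depends only on $\max(|s|,|t|)$, in the rotated coordinates in which the lattice picture is a square. More precisely, I would show that $N = \lfloor (n-m)/2\rfloor+1$-type expressions on each square ring add up to the nested-square pattern: the value is $1$ on the outer ring and increases by $1$ per inner ring. Summing over the rings then gives $(n+1)+8\,n(n+1)(n+2)/6$, as in (\ref{D}).

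I expect the main obstacle to be the bookkeeping at the central line $(0,2p)$. There the three rows of (\ref{0,2p}) must combine, with the parity-dependent $a_+a_-$ term filling the gap, to give exactly the ring value. I would verify this by splitting into $n-p$ even and odd and checking that $\lfloor\frac{n-p}{2}\rfloor+1+\lfloor\frac{n-p-2}{2}\rfloor+1+\varepsilon_{n-p-1}$ equals the corresponding value off the axis. The case $n=1$ has to be excluded separately, because there $d_1=9$.
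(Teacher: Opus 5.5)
Your proposal is correct and is essentially the paper's proof. The paper likewise reads off the monomials of each non-negative grading from the list (\ref{list1})--(\ref{list6b}) supplied by Theorem~\ref{theor1}, counts $n-p-k+1$ of them at $(2k,2p)$ and $n-p-k$ at $(2k+1,2p+1)$ (your value $n+1-m$ on the $m$-th ring), and sums to (\ref{D}); your separate treatment of $n=1$, where $d_1=9$, is right. There are two harmless slips: (\ref{antihom}) acts on gradings by $(s,t)\mapsto(-s,-t)$, not $(s,t)\mapsto(s,-t)$ (it is its composite with (\ref{hom}) that gives $(s,t)\mapsto(s,-t)$, so all four quadrants are still covered), and the $a_+a_-$ term on the central line is present exactly when $n-p$ is odd, so in the paper's convention it is $\varepsilon_{n-p}$ rather than $\varepsilon_{n-p-1}$.
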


\noindent {\bf Proof.}~
The expressions (\ref{0,2p})--(\ref{2k+1,2p+1}) for irreducible ${\Bbb Z}^2$-- graded degree $n$ monomials are extracted directly from the list of all irreducible monomials (\ref{list1})--(\ref{list6b}). Now, it is easy to count the number of irreducible ${\Bbb Z}^2$--graded monomials of degree $n$: $(n-p-k+1)$ pieces of degree $(2k,2p)$, and $(n-p-k)$ pieces of degree $(2k+1,2p+1)$. Thus, our calculation (\ref{D}) of the number of irreducible monomials of degree $n$ is correct.
\hfill$\blacksquare$

\subsection{Transition to the quotient algebra ${\cal F}_q(O(3))$ of quantized functions on the Lie group $O(3)$}

Keeping in mind that the contraction  $g$ has simple permutation relations in the algebra ${\cal F}(R)_{q,v}$ it seems reasonable to use it as one of the algebra generators. Indeed, one can use $g$ instead of $x_2^2$:
$$
x_2^2\, =\, q^2\, g \, +\, {\lambda -2\over q}\, a_+ b_-\,+\, {\lambda\over q}\, x_1 x_3 \, .
$$
With this substitution the ordering relations (\ref{rem-ord-6}), (\ref{rem-ord-8}), (\ref{rem-ord-9}), the orthogonality relation (\ref{ort-5}) and the diamond relations (\ref{diam-1}), (\ref{diam-4}) take a form
\ba
\lb{rem-ord-6-2}
x_3 x_1 &=&  {1+\lambda\over q^2} x_1 x_3\, +\, {\lambda\over q^2} a_+ b_-\, + \lambda q^2 (q-1) g\, ,
\\[2pt]
\lb{rem-ord-8-2}
b_+ a_- &=& {v^4\over q} b_- a_+\, =\, v^2\Bigl(  {1-\lambda\over q^2} a_+ b_-\, -\, {\lambda\over q^2} x_1 x_3 \, +\, \lambda q^2 g\Bigr)\, ,
\\[2pt]
\lb{ort-5-2}
c_+ c_- &=&  v^2\bigl( - q^{-1} a_+ b_-\, -\, q^{-1} x_1 x_3 \, +\, q^3 g\bigr)\, ,
\\[2pt]
\lb{diam-1-2}
a_+ a_- x_2 &=& -\, {1+2q\over q^2} a_+ x_1 b_- \, +\, v{1+q\over q^3}x_1\bigl( q^4 g\, -\,  x_1 x_3 \bigr)\, ,
\\[2pt]
\lb{diam-4-2}
x_2\, b_+ b_- &=& -\, {1+2q\over q^2} a_+ x_3 b_- \, +\, v{1+q\over q^3}\bigl( q^4 g\, -\,  x_1 x_3 \bigr)x_3\, ,
\ea

Then, all even powers of $x_2$ can be expressed in terms of $g$.
\begin{prop}
	\lb{prop-x2g}	
	\be	
	\lb{x2n}
	x_2^{2n} \, =\, \sum_{i\geq 0} F_{n,i}\,  g^{n-i} x_1^i x_3^i\, +\,
	v^{-2n}\sum_{i\geq 0} G_{n,i}\,  a_+\, g^{n-i-1} x_1^i x_3^i\, b_-\,  \quad\forall\, n>0\, ,
	\ee
	where $F_{n,i}$ and $G_{n,i}$ are some polynomials in $q^{\pm 1}$.
\end{prop}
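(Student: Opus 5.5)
We argue by induction on $n$. The base $n=1$ is the substitution formula for $x_2^2$ displayed just before the proposition, rewritten with $g$ placed in the required position. This gives $F_{1,0}=q^2$, $F_{1,1}=\lambda/q$, $G_{1,0}=v^2(\lambda-2)/q$, and all other coefficients zero. For the induction step we write $x_2^{2n+2}=x_2^{2n}\,x_2^2$ and substitute the induction hypothesis for $x_2^{2n}$ and the base formula for $x_2^2$. Throughout we use (\ref{T-g-e3}) with $G_R={\rm diag}(v^{-2},1,v^2)$: it says that $g$ commutes with the diagonal generators $x_i$ and $q$-commutes with the off-diagonal ones, picking up only powers of $v$. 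Hence every occurrence of $g$ can be moved to the left of all other generators at the cost of a monomial in $v^{\pm1}$. Moving $g$ past $a_+$ and past $b_-$ gives opposite factors, which accounts for the normalization $v^{-2n}$ in front of the $a_+\dots b_-$ terms.

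After this rearrangement the product $x_2^{2n}x_2^2$ is a linear combination of four types of terms, each multiplied by a power of $g$:
\begin{enumerate}
\item $x_1^ix_3^i\,x_1x_3$;
\item $x_1^ix_3^i\,a_+b_-$;
\item $a_+x_1^ix_3^i\,b_-\,x_1x_3$;
\item $a_+x_1^ix_3^i\,b_-\,a_+b_-$.
\end{enumerate}
The key step is to reduce each type to the span of $\{x_1^jx_3^j,\ a_+x_1^jx_3^jb_-\}$ with $g$-coefficients. For this we use the $g$-versions of the relations, (\ref{rem-ord-6-2}), (\ref{rem-ord-8-2}), (\ref{ort-5-2}), (\ref{diam-1-2}), (\ref{diam-4-2}), together with Lemma \ref{x1^n} and Corollary \ref{cor-1}. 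In these relations the combination $x_2^2-x_1x_3$ is replaced by $q^2g$ plus terms in $a_+b_-$ and $x_1x_3$, so no odd power of $x_2$ can appear.

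Types 1 and 2 are handled by moving $x_3^i$ past $x_1$ with the image of (\ref{x3x1}) under (\ref{antihom}) and (\ref{hom}), rewritten via $g$, and moving $a_+$ past $x_1^i$ with (\ref{trip-ord-1}). This produces only terms of the two allowed shapes, with $g$ appearing through (\ref{rem-ord-6-2}).

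The main obstacle is type 4, and to a lesser degree type 3. Here a second $a_+$ must be moved across $b_-$ and across powers of $x_1,x_3$. This creates $a_+^2$, $b_-^2$ and $b_-a_+$, which are reduced by the orthogonality relations (\ref{ort-1}), (\ref{ort-2}) and by (\ref{rem-ord-9})/(\ref{rem-ord-8-2}). These in turn generate $c_\pm$, and then $c_+c_-$ via (\ref{ort-5-2}). I would first reduce the middle factor $b_-a_+$ by (\ref{rem-ord-8-2}) into $a_+b_-$, $x_1x_3$ and $g$. The resulting $a_+x_1^ix_3^i\,a_+b_-\,b_-$ is then treated using the diamond relations (\ref{diam-1-2}), (\ref{diam-4-2}), (\ref{diam-5}) and Lemma \ref{lem1}. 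Together these convert $a_+a_-$ and $b_+b_-$ combinations, and $a_+$ paired with $b_-$ across a $Z^{(n)}$, back into the two allowed shapes.

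Any intermediate $x_2$'s that arise, for example from (\ref{ort-3}), (\ref{ort-4}) or (\ref{diam-1}), must appear in pairs by $\mathbb Z^2$-grading and parity. The total grading is $(0,0)$, and every monomial in the final list of Proposition \ref{prop2} of this grading with an odd $x_2$-power would have to carry odd total degree. Since the total degree $2n+2$ is even, the forbidden monomials ($x_2^{odd}$ terms, and $a_+a_-x_1^\ast x_3^\ast$, which needs $n-p$ odd) cannot occur in the reduced expression. This parity argument, combined with Theorem \ref{theor1}, gives a cleaner route than tracking each term: reduce $x_2^{2n}$ to the irreducible basis, where only $x_1^ix_2^{2n-2i}x_3^i$ and $a_+x_1^ix_2^{2n-2i-2}b_-x_3^i$ appear, then replace the lower even powers of $x_2$ by induction. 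To make this work one must check that substituting $x_2^{2m}$ into these positions, between $x_1^i$ and $x_3^i$ or $b_-$, closes up. That check is again the type 3 and type 4 computation, and it is where most of the work lies.
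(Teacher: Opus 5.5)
Your main route is sound at the level of which monomials occur. It is an induction via $x_2^{2n+2}=x_2^{2n}x_2^2$, with the products sorted into your four types. It is organised differently from the paper's sketch, which runs a simultaneous induction for $(a_+b_-)^n$. You do not need that auxiliary family, because each step multiplies by only one factor $a_+b_-$. After (\ref{rem-ord-8-2}), the critical term $a_+x_1^ix_3^i\,a_+b_-^2$ reduces, via (\ref{x3ap}), (\ref{ort-1}), (\ref{ort-2}), (\ref{ort-4}) and $q$-commutations, to two kinds of terms. The first is $x_1^{i+1}c_+c_-x_3^{i+1}$, which (\ref{ort-5-2}) handles. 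The second consists of terms in which $x_2^2$, $x_2b_+b_-$ or $a_+a_-x_2$ appear, and the base formula, (\ref{diam-4-2}) and (\ref{diam-1-2}) close these up without circularity. For type 2, note that $a_+$ must first cross $x_3^i$ by (\ref{x3ap}). This produces $x_1^ix_2x_3^{i-1}b_+b_-$, so (\ref{diam-4-2}) is needed there, not just (\ref{trip-ord-1}).

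The step that fails is the $v$-bookkeeping. Your own base case gives $G_{1,0}=v^2(\lambda-2)/q$, which is not a polynomial in $q^{\pm1}$. So the claim that moving $g$ accounts for the prefactor $v^{-2n}$ is false. Track the powers of $v$ through each relation you use:
\begin{itemize}
\item $ga_+=v^{-2}a_+g$ and $b_-g=v^{-2}gb_-$;
\item $x_1a_+=\frac qv a_+x_1$ and $b_-x_3=\frac qv x_3b_-$;
\item $b_-a_+\propto v^{-2}$ in (\ref{rem-ord-8-2}) and $c_+c_-\propto v^{2}$ in (\ref{ort-5-2}).
\end{itemize}
With $g$ placed as in (\ref{x2n}), you then find that the $F$-terms are $v$-free and every $a_+\cdots b_-$ term carries exactly $v^{2-2n}$. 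Take $n=2$, with $P=a_+b_-$ and $\alpha=(\lambda-2)/q$. The $a_+gb_-$ terms come from $q^2\alpha(gP+Pg)=2q^2\alpha v^{-2}a_+gb_-$ and from the $g$-part of $b_-a_+$ in $\alpha^2P^2$. Together they give $v^{-2}q(\lambda-2)(\lambda^2-2\lambda+2)\,a_+gb_-$, not $v^{-4}$ times a $q$-polynomial. This matches the $v^{-2}$ and $v^{-4}$ in (\ref{g2}) and (\ref{g3}). So the printed normalization is off by $v^2$. What your induction actually proves, once this weight count is built into the inductive step, is the statement with $v^{2-2n}$.

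Drop the ``cleaner route''. $x_2^{2n}$ is itself one of the irreducible monomials (\ref{list1}), so Theorem \ref{theor1} returns it unchanged, and its $i=0$ term is not a lower power of $x_2$. Your parity remark is correct but yields nothing as it stands. It becomes useful only after you expand $x_2^{2n-2}x_2^2$ and show, from the reduction process itself, that no $x_2^{2n}$ term reappears.
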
	
The proof of this statement follows the same pattern as the proofs of lemmas \ref{x1^n} and \ref{x2^n}; therefore, we shall provide here only two essential remarks regarding it.
 
 {1).}~ Formula (\ref{x2n}) is proved by induction on $n$. In the course of the proof, it will also be necessary to derive an expression for $(a_+ b_-)^n$. This expression contains the same terms as those in (\ref{x2n}), although with the different $q$-dependent coefficients.
		
 {2).}~ It is easy to find $F_{n,0}= q^{2n}$. We do not specify the explicit form of the coefficients $F_{n,i}$ and $G_{n,i}$.
 None of them is divisible by $\lambda$.
\phantom{a}\hfill$\blacktriangleright$\medskip

We conclude that in the list (\ref{list1})--(\ref{list6b}) of  irreducible monomials, the factors $x_2^{2k}$ and $x_2^{2k+1}$ can be replaced by $g^k$ and $g^k x_2$, respectively.
\smallskip

Now, let us assume that the contraction $g$ and the 3-elementary sum ${\goth e}_3$ are both central. From eqs. (\ref{G}), (\ref{O}), and (\ref{T-g-e3}), we conclude, that this is possible if and only if $v=1$. Factorizing the algebra ${\cal F}_{q,v=1}(R)$  by the relation 
$q^{2(n-1)}g=1$ (note that the element $q^{2(n-1)}g$ is group-like) we obtain the quotient algebra ${\cal F}_q(O(3))$. This is the standard quantization of the algebra of functions on the orthogonal Lie group $O(3)$ (cf.  \cite{FRT}). To conclude this section, we compute the Hilbert–Poincaré series for this algebra.
\smallskip

The list of irreducible monomials in ${\cal F}_q(O(3))$ is the subset of the list
(\ref{list1})--(\ref{list6b}), where the generator $x_2$ may appear in the powers $0$ and $1$ only.
Such restriction  excludes 
\begin{itemize}
	\item[*)] 
	$1+2^2+3^2+\dots + (n-1)^2={n(n-1)(2n-1)\over 6}$ elements of the total degree $n$ from the set (\ref{list1}) (the summands count monomials containing $x_2^n, x_2^{n-1}, \dots , x_2^2$).
	\item[**)] 4 times $1+3+6+\dots +{(n-1)(n-2)\over 2}={n(n-1)(n-2)\over 6}$ elements of the total degree $n$ from the sets (\ref{list2}) and (\ref{list3}) (the summands count monomials containing $X x_2^{n-1}, X x_2^{n-2}, \dots , X x_2^2$, where 
	$X$ is $a_+$, $a_-$, $b_+$, or $b_-$).
	\item[***)] 
	2 times $1+3+\dots +{(n-2)(n-3)\over 2}={(n-1)(n-2)(n-3)\over 6}$ elements of the total degree $n$ from the set (\ref{list4})  (the summands count monomials containing $X x_2^{n-2} Y, \dots ,  X x_2^2 Y$, where 
	$\{X,Y\}$ is either $\{a_+, b_+\}$, or $\{a_-, b_-\}$).
	\item[****)] 
	$1+2+3+\dots +(n-3)={(n-2)(n-3)\over 2}$ elements of the total degree $n$ from the set (\ref{list5})  (the summands count monomials containing $a_+ x_2^{n-2} b_-, \dots ,  a_+ x_2^2 b_-$).
\end{itemize}
Altogether we exclude from the list of irreducible monomials
$$
\textstyle
{n(n-1)(2n-1)\over 6}\, +\, 4\times {n(n-1)(n-2)\over 6}\, +\, 2\times {(n-1)(n-2)(n-3)\over 6}\, +\, 
{(n-2)(n-3)\over 2} \, =\, 8{n\choose 3}+1
$$
elements containing $x_2$  in  powers greater than 1.
The remaining sub-list contains
$$
\textstyle
{\tilde d}_0=1, \quad {\tilde d}_n = {2n+3\choose 3}-8 {n\choose 3} -1 = n(8n+1) \quad \forall n>0
$$
elements. This is the number of the irreducible degree $n$ monomials in the algebra $O_q(3)$.
So the Hilbert–Poincaré series for ${\cal F}_q(O(3))$ is
\be
\lb{HP-O3}
{\rm \tilde P}(t)\, =\, 1+ \sum_{n\geq 1} n(8n+1) t^n\, =\, 1+\frac{ t(7t+9)}{(1-t)^3} \, .
\ee

\subsection{Verification of the general theory results}
In this subsection, we verify the results presented in Section \ref{RTT-alg} regarding the general structure of QM-algebras, using the RTT-algebra ${\cal F}(R)_{q,v}$ as an example. The calculations utilize the linear basis constructed in the Section \ref{4.1}. As we have seen, the calculations prove to be quite cumbersome; the Mathematica symbolic computation software was used to carry them out.

The expansions of the generators of the characteristic subalgebra of the algebra ${\cal F}(R)_{q,v}$ in terms of ordered monomials have the following form.
\ba
\lb{g}
g &:=&  { q\over (q^2+q+1)}\,\Tr{1,2} \bigl( T_{1}T_{2}\, K_R\bigr)\, =\,
{2-\lambda\over q^3}\, a_+ b_- \, -\, {\lambda\over q^3}\, x_1 x_3\, +\, {1\over q^2}\, x_2^2,
\\[4pt]
\lb{e1}
\goth{e}_1&:=&   \tr_{\!R}\, T = {q^{-3}}\, x_1\, +\,{q^{-2}}\, x_2\, +\, {q^{-1}}\, x_3,
\\[4pt]
\nn
\goth{e}_2&:=&  \Tr{1,2}\bigl(T_{ 1} T_{ 2} A_R^{(2)}
\bigr)\, =\, 
{1\over q^5 v}\left(v x_1 x_2 - q a_+ a_-\right)\ 
\\
\lb{e2}
&&+\, 
{1\over q^4}\left((1+q)x_1 x_3 -q x_2^2+(\lambda -1)a_+ b_-\right)\,
+\, {1\over q^3 v}\left(v x_2 x_3- q b_+ b_-\right),
\\[4pt]
\nn
\goth{e}_3 &:=&  \Tr{1,2,3}\bigl(T_{ 1} T_{ 2} T_{ 3} A_R^{(3)}
\bigr)\, =\, {1+q^3\over q^6}\, x_1x_2x_3\, -\, {1\over q^3}\, x_2^3
\\
\lb{e3}
&&  -\,{2\over q^6 v}\left(1+q+\lambda q^2(q-1)\right)a_+a_-x_3\,-\,
{1\over q^6 v}\left(1+q^2+q^4-3\lambda q^2\right)a_+x_2b_-\, .
\ea
Here elements $g$ and $\goth{e}_3$ have ${\Bbb Z}^2$-grading $(0,0)$, while $\goth{e}_1$ and $\goth{e}_2$
have summands of the gradings $(0,2)$, $(0,0)$ and $(0,-2)$. They are placed in this order in their expressions (\ref{e1}) and (\ref{e2}). 
\smallskip

First, we have checked explicitly commutation relations (\ref{T-g-e3}).
Note that these computations involve the ordering and the orthogonality relations, and all the diamond relations too.
\smallskip

Relations (\ref{T-g-e3}) guarantee the commutativity of $g$ and $\goth{e}_3$ between themselves and with $\goth{e}_1$ and $\goth{e}_2$.  We have checked the commutativity of the elements  $\goth{e}_1$ and $\goth{e}_2$  too. For this check it is suitable to calculate the ${\Bbb Z}^2$-graded components $(0,4)$, $(0,2)$, \dots , $(0,-4)$ of the commutator $[\goth{e}_1,\goth{e}_2]$ separately.
\smallskip

Next, we checked the reciprocal relations
\be
\lb{reciprocal}
g \goth{e}_2 = \goth{e}_1 \goth{e}_3, \qquad g^2 \goth{e}_1 = \goth{e}_2 \goth{e}_3, \qquad g^3 = \goth{e}_3^2.
\ee
These calculations are quite labor-intensive, but they can be automated. We performed them using the Wolfram Mathematica software equipped with the NCAlgebra package, designed for operations with noncommutative associative algebras.
To illustrate the complexity of the calculations, we present the irreducible expressions for $g^2$ and $g^3$
\ba
\nn
g^2 &=& \mbox{$\mathsmaller{   {1\over q^4}\, x_2^4\, +\, 
		{(1 + q) (3 + 9 q + 5 q^2 - 12 q^3 - 8 q^4 + 11 q^5 + 5 q^6 - 6 q^7 + q^8)\over q^{10}}\,x_1 x_2^2 x_3\, -\,
		{(1 + q)^2 (3 + 6 q - 2 q^2 - 9 q^3 + 12 q^5 - 7 q^6 + q^7)\over q^{10}}\, x_1^2 x_3^2} $}
\\
\lb{g2}
&&
\mbox{$\mathsmaller{ +\, {(1 + 2 q - q^2) (1 + 2 q - 2 q^3 + q^4)\over q^8 v^2} \, a_+ x_2^2 b_-\, +\,
		{(1 + q) (6 + 15 q - 3 q^2 - 27 q^3 + 2 q^4 + 26 q^5 - q^6 - 17 q^7 + 8 q^8 - q^9)\over q^9 v^2}\, a_+ x_1 x_3 b_-
	} $}
\\[8pt]
\nn
g^3&=& \hspace{-2mm}\mbox{$\mathsmaller{{1\over q^6}\,x_2^6\, +\,
		{(1 + q) (5 + 30 q + 61 q^2 - 162 q^4 - 125 q^5 + 
			210 q^6 + 245 q^7 - 177 q^8 - 228 q^9 + 132 q^{10} + 110 q^{11} - 
			82 q^{12} - 14 q^{13} + 27 q^{14} - 9 q^{15} + q^{16})\over q^{16}}\, x_1 x_2^4 x_3 } $}
\\
\nn
&&
\hspace{-2mm}\mbox{$\mathsmaller{
		+\,{ (1 + q)^2\over q^{21}}\, (40 + 240 q + 405 q^2 - 444 q^3 - 2150 q^4 - 
		943 q^5 + 4418 q^6 + 4695 q^7 - 5112 q^8 - 8571 q^9 + 3935 q^{10} + 
		9597 q^{11} - 2640 q^{12} 
	} $}
\\[-2pt]
\nn		
&&\hspace{22mm}\mbox{$\mathsmaller{				
		- 7377 q^{13} + 2130 q^{14} + 3863 q^{15} - 
		1692 q^{16} - 1144 q^{17} + 877 q^{18} + 31 q^{19} - 210 q^{20} + 86 q^{21} - 
		15 q^{22} + q^{23}) \, x_1^2 x_2^2 x_3^2
	} $}
\\[4pt]
\nn
&&
\hspace{-2mm}\mbox{$\mathsmaller{
		-\, {(1 + q)^3\over  q^{21}}\, (40 + 200 q + 205 q^2 - 649 q^3 - 1501 q^4 + 563 q^5 + 3880 q^6 + 
		851 q^7 - 5999 q^8 - 2699 q^9 + 6637 q^{10} + 3166 q^{11} 
	} $}
\\[-2pt]
\nn		
&&\hspace{22mm}\mbox{$\mathsmaller{				
		- 5766 q^{12} - 1829 q^{13} +3950 q^{14} + 54 q^{15} - 1777 q^{16} + 582 q^{17} + 332 q^{18} - 311 q^{19} + 
		102 q^{20} - 16 q^{21} + q^{22})\, x_1^3  x_3^3
	} $}	
\\[4pt]
\nn
&&
\hspace{-2mm}\mbox{$\mathsmaller{
		+\, {(1 + 2 q - q^2) (1 + 3 q + q^2 - 3 q^3 + q^4) (1 + q - q^2 - q^3 + 
			q^4)\over q^{12} v^4 }\, a_+ x_2^4 b_-
	} $}
\\[4pt]
\nn
&&
\hspace{-2mm}\mbox{$\mathsmaller{
		+\,{(1 + q)\over q^{17} v^{4}}\, (20 + 125 q + 245 q^2 - 79 q^3 - 918 q^4 - 703 q^5 + 
		1485 q^6 + 2098 q^7 - 1420 q^8 - 3049 q^9 + 1064 q^{10} + 
		2845 q^{11} 
	} $}
\\[-2pt]
\nn		
&&\hspace{23mm}\mbox{$\mathsmaller{				
		- 914 q^{12} - 1779 q^{13} +841 q^{14} + 622 q^{15} - 517 q^{16} - 17 q^{17} + 145 q^{18} - 66 q^{19} + 
		13 q^{20} - q^{21}) \, a_+ x_1  x_2^2 x_3 b_- 
	} $}		
\\[4pt]
\nn
&&
\hspace{-2mm}\mbox{$\mathsmaller{
		+\,{(1 + q)^2\over q^{18} v^{4}}\, (80 + 420 q + 465 q^2 - 1413 q^3 - 3542 q^4 + 1082 q^5 + 
		9598 q^6 + 2984 q^7 - 15431 q^8 - 9126 q^9 + 17532 q^{10}
	} $}
\\[-2pt]
\nn		
&&\hspace{17mm}\mbox{$\mathsmaller{				
		+ 12266 q^{11} - 15685 q^{12} -9866 q^{13} + 11647 q^{14} + 4535 q^{15} - 6914 q^{16} - 400 q^{17} + 
		2738 q^{18} - 773 q^{19}
	} $}
\\[-2pt]
\lb{g3}		
&&\hspace{75mm}\mbox{$\mathsmaller{		
		- 455 q^{20} + 382 q^{21} - 116 q^{22} + 17 q^{23} - 
		q^{24}) \, a_+ x_1^2   x_3^2 b_- 
	} $}				
\ea

Our final check concerns the characteristic identity (\ref{orthoCH-odd}) for the quantum matrix $T$. In the case of the $O(3)$-type RTT algebra, this identity looks like
\be
\lb{CH-O3}
T^{\overline{3}} g\, -\, T^{\overline{2}}\left( q\, g\, {\goth e}_1 +(1-q) {\goth e}_3\right)\, +\,
T\left( q\, g\, {\goth e}_2 +q(q-1)\, g^2)\right)
\, -\, {\rm Id}_V\, q^2 g\, {\goth e}_3\, =\, 0.
\ee
To verify this, an explicit formula is required for the linear map $\phi_R$ (\ref{phi}) involved in the construction of the powers (\ref{Tk}) of the quantum matrix $T$. The map is given by

\be
\phi_R(T)\, =\, \left(
\begin{array}{ccc}
{1\over q^2} x_1 + {\lambda\over q^2} x_2+ {\lambda (q-1)\over q^2} x_3 & {v\over q^3} a_+ -{\lambda\over q^2} b_+
& {v^2\over q^4} c_+
\\[5pt]
{1\over q^2 v} a_- -{\lambda\over q^2} b_- & {1\over q^2} x_2 +{\lambda\over q} x_3 & {v\over q^2} b_+
\\[5pt]
{1\over q^2 v^2} c_- & {1\over q v} b_- & x_3
\end{array}
\right) .
\ee
Direct calculations with the use of Wolfram Mathematica confirm the identity (\ref{CH-O3}).

\appendix
\section{Multiparametric orthogonal R-matrices}

\lb{primcontr}

\def\theequation{\thesection.\arabic{equation}}
\makeatletter\@addtoreset{equation}{section}\makeatother

Here we describe multiparametric $O(N)$ type R-matrices \cite{Sch,I} (see also 
\cite{IOP3}), which can be derived from the standard orthogonal R-matrices \cite{FRT} by 
the twist procedure \cite{Resh}. We give a parameterization for these R-matrices in terms 
of independent parameters.
\smallskip

Let $E_{ij}$ be $N\times N$  matrix units. Denote $i':=N+1-i$,~ $n:=\lfloor N/2\rfloor$. Define 
$$
\rho_i=-\rho_{i'}:=\frac{N}{2}-i\,\,  \forall\,i=1,\dots, n,
$$ 
and  additionally $\rho_{n+1}=0$ for odd $N$ (i.e., if $N=2n+1$).
The multiparametric $O(N)$ type R-matrix reads   
\be
\lb{multiR}
R = \sum_{i,j=1}^{N} u_{ij}\, E_{ij}\otimes E_{ji} + (q-q^{-1})\!
\sum_{i,j=1\atop i<j}^N E_{ii}\otimes E_{jj} -(q-q^{-1})\!
\sum_{i,j=1\atop i<j'}^N v_{ij}\,q^{-(\rho_i+\rho_j)} E_{ij}\otimes E_{i'j'}
\ee
Here parameters $u_{ij}\in {\Bbb C}\setminus \{0\}$ are independent if $1\leq i<j\leq n$. For the other 
values of their indices  we describe $u_{ij}$ separately for the cases $N$  even and odd:
\ba
\lb{par-u-even}
N=2n:&& u_{ii}=q\,\, \forall\, i=1,\dots ,N,\qquad 
u_{ji}=u_{ij}^{-1}\,\, \forall\, i,j=1,\dots n\, :\, i\neq j, 
\\[1pt]
\nn
&&  u_{i'j}= (u_{ij} x_j)^{-1}, \quad u_{ij'}=x_i u_{ij}^{-1}, \quad
u_{i'j'}=u_{ij}x_j x_i^{-1}\,\, \forall i,j=1,\dots ,n,
\\[5pt]
\lb{par-u-odd}
N=2n+1:&& u_{ii}=q^{1-\delta_{i,n+1}}\,\, \forall\, i=1,\dots ,N,\qquad 
u_{ji}=u_{ij}^{-1}\,\, \forall\, i,j=1,\dots n\, :\, i\neq j, 
\\[1pt]
\nn
&&
u_{i,n+1} = u_{n+1, i}^{-1} = u_{n+1,i'}= u_{i',n+1}^{-1}= y_i\,\, \forall i=1,\dots ,n,
\\[1pt]
\nn
&&  u_{i'j}= (u_{ij} y^2_j)^{-1}, \quad u_{ij'}=y^2_i u_{ij}^{-1}, \quad
u_{i'j'}=u_{ij}y^2_j y_i^{-2}\,\, \forall i,j=1,\dots ,n,
\ea
where parameters $x_i, y_i \in {\Bbb C}\setminus\{0\}$, $i=1,\dots ,n$, are independent.

For parameters $v_{ij}$ in (\ref{multiR}) one has following expressions
\ba
\lb{par-v-even}
N=2n:&& 
v_{ij}=z_j z_i^{-1}, 
\quad  v_{i'j}= v_{ij} x_i^{-1}, \quad v_{ij'}=v_{ij} x_j\,\, \forall i,j=1,\dots ,n,
\\[5pt]
\lb{par-v-odd}
N=2n+1:&& v_{ij}=z_j z_i^{-1}\,\, \forall i,j=1,\dots ,n+1,
\\[1pt]
\nn
&&
v_{i'j}= v_{ij} y_i^{-2}, \quad v_{ij'}=v_{ij} y_j^2\,\, \forall i,j=1,\dots ,n,
\ea
where $z_i\in {\Bbb C}\setminus \{0\}$, and one can set $z_1=1$ without loss of generality.

Altogether the R-matrix (\ref{multiR}) contains $n(n+3)/2-1$ parameters in case 
$N=2n$,
and $n(n+3)/2$ parameters in case $N=2n+1$.

\section*{Acknowledgments}

The authors thank Dimitry Gurevich and Pavel Saponov for their time, fruitful 
discussions and valuable remarks.
The work of the second author (P. P.) was  carried out within
the project "Symmetry. Information. Chaos" (HSE-BR-2025-84) of the Basic Research Program at HSE University.

\end{document}